\newtheorem{theorem}{Theorem}[section]
\newtheorem{lemma}[theorem]{Lemma}
\theoremstyle{definition} 
\theoremstyle{remark} 
\numberwithin{equation}{section}
\newcommand{\g}{\geqslant} 
  \newcommand{\RR}{\mathbb{R}}
 \newcommand{\CC}{\mathbb{C}}
\newcommand{\sph}{\mathbb{S}} 
\newcommand{\p}{\partial} \newcommand{\q}{\varphi}
\newcommand{\les}{\leqslant}
\newcommand{\mc}[1]{\mathcal{#1}} \newcommand{\mb}[1]{\mathbf{#1}}
 \newcommand{\lr}[1]{ \langle #1
  \rangle}
\renewcommand{\Re}{\mathrm{Re}}
\DeclareSymbolFont{bbold}{U}{bbold}{m}{n}
\DeclareSymbolFontAlphabet{\mathbbold}{bbold}
\begin{document}

\title[On the Majorana condition]{On the Majorana condition for nonlinear Dirac systems}%
\author[T.~Candy]{Timothy Candy}%
\address[T.~Candy]{Universit\"at Bielefeld, Fakult\"at f\"ur
  Mathematik, Postfach 100131, 33501 Bielefeld, Germany}
\email{tcandy@math.uni-bielefeld.de}%
\author[S.~Herr]{Sebastian Herr}%
\address[S.~Herr]{Universit\"at Bielefeld, Fakult\"at f\"ur
  Mathematik, Postfach 100131, 33501 Bielefeld, Germany}
\email{herr@math.uni-bielefeld.de} \thanks{Financial support by the
  German Research Foundation through the CRC 1283 ``Taming uncertainty and profiting from
  randomness and low regularity in analysis, stochastics and their
  applications'' is acknowledged.}%
\subjclass[2010]{42B37, 35Q41}%
\keywords{Cubic Dirac equation, Dirac-Klein-Gordon system, global
  existence, scattering, Majorana condition}%

\begin{abstract}
  For arbitrarily large initial
  data in an open set defined by an approximate Majorana
  condition, global existence and scattering results for
  solutions to the Dirac equation with Soler-type nonlinearity and the Dirac-Klein-Gordon system in
  critical spaces in spatial dimension three are established.
\end{abstract}
\maketitle

\section{Introduction}\label{sec:intro}

Let $m,M\geq 0$. Using the summation convention with respect to
$\mu=0,\ldots,3$, the cubic Dirac equation (Soler model) for a spinor
$\psi: \RR^{1+3} \rightarrow \CC^4$ is given by
\begin{equation}\label{eqn-cubic dirac}
  - i \gamma^\mu \p_\mu \psi + M \psi = (\overline{\psi} \psi) \psi.
\end{equation}
Here, $x^0 = t$, $\p_0 = \p_t$, and $\overline{\psi} = \psi^\dagger \gamma^0$ is the Dirac adjoint, where
$\psi^\dagger$ denotes the complex conjugate transpose of
the spinor $\psi$, and the matrices  $\gamma^\mu \in \CC^{4\times 4}$
are the standard Dirac matrices, see \cite{Candy2016}.
Writing $\Box=\p_t^2-\Delta$, the Dirac-Klein-Gordon system is
\begin{equation} \label{eqn-DKG}
  \begin{split}
    - i \gamma^\mu \p_\mu \psi + M \psi &= \phi \psi, \\
    \Box \phi + m^2 \phi &= \overline{\psi} \psi,
  \end{split}
\end{equation}
where $\phi: \RR^{1+3} \rightarrow \RR$ is a scalar field.  These
equations \eqref{eqn-cubic dirac} and \eqref{eqn-DKG} arise as in
relativistic quantum mechanics as toy models for interactions of
elementary particles, see e.g. \cite{Bjorken1964,Thaller}.

In previous work, we have addressed the initial value problems for the
above equations for small initial data of low regularity. Concerning
the cubic Dirac equation, we have obtained small data global
well-posedness and scattering in the massive case $M>0$
\cite{Bejenaru2014a,Bejenaru2015b} as well as the massless case $M=0$
\cite{Bournaveas2015}.  For the massive Dirac-Klein-Gordon system, we have
obtained small data global well-posedness in the  non-resonant
regime for initial data of subcritical regularity \cite{Bejenaru2015}
and both in the resonant and the non-resonant regime in the critical
space with additional angular regularity \cite{Candy2016}. Concerning
a more complete account on earlier work on the low regularity
well-posedness problem, we refer to the references therein. The purpose
of the current article to gain insight into the asymptotic behaviour
of an open set of large data solutions to \eqref{eqn-cubic dirac} and \eqref{eqn-DKG}.

In \cite{Chadam1974} Chadam and Glassey considered the equations
\eqref{eqn-cubic dirac} and \eqref{eqn-DKG} under the assumption that
the initial data was of the form
\begin{equation}\label{eqn-chadam glassey struct}
  \psi(0) = (f,g,-g^*,f^*)^t\end{equation}
where, given a complex scalar (or vector) $z \in \CC^n$,
we let $z^*$ denote the complex conjugate, and $f, g: \RR^3 \rightarrow \CC$. This condition
\eqref{eqn-chadam glassey struct} is equivalent to
\begin{equation}\label{eq:lm} \psi(0) + z \gamma^2
  \psi^*(0)=0
\end{equation}
with $z=-i$, see \cite{Ozawa2004}. A computation shows that the
condition \eqref{eqn-chadam glassey struct} is conserved under the
evolution of \eqref{eqn-cubic dirac} and \eqref{eqn-DKG}, and
moreover, that if $\psi$ is of the form \eqref{eqn-chadam glassey
  struct} then $\overline{\psi} \psi = 0$.  Consequently, under the
assumption \eqref{eqn-chadam glassey struct}, the cubic Dirac equation
\eqref{eqn-cubic dirac} and the Dirac-Klein-Gordon system
\eqref{eqn-chadam glassey struct} reduce to equations which are linear
in $\psi$. In particular, the argument of Chadam-Glassey gives
scattering and global well-posedness for \eqref{eqn-cubic dirac} and
\eqref{eqn-DKG} for a class of large data \cite{Chadam1974}.  The
structural condition \eqref{eqn-chadam glassey struct} considered by
Chadam and Glassey was introduced in the physics literature long
before by Majorana \cite{Majorana1937} to describe fermions which are
their own anti-particles, see \cite{elliott2015} for an overview.

Our main Theorems \ref{thm-main cubic
  dirac} and \ref{thm-main dkg} below pertain to solutions emanating
from initial data which approximately satisfy the algebraic condition
\eqref{eq:lm} with $|z|=1$. For the results concerning the cubic Dirac equation
\eqref{eqn-cubic dirac}, we rely on the estimates obtain in
\cite{Bejenaru2014a,Bejenaru2015b,Bournaveas2015}. On the other hand, in the case of the Dirac-Klein-Gordon system \eqref{eqn-DKG}, we require more refined estimates than those used in \cite{Candy2016} to obtain the current sharpest small data global theory. The reason is that we have to deal with a large potential in
the Dirac equation, which essentially is a free Klein-Gordon
wave. Instead, we use refined estimates obtained in \cite{Candy2017a} which give a small power of a space-time $L^4_{t,x}$ norm on the righthand side. 

The main result regarding the cubic Dirac equation is the following.

\begin{theorem}\label{thm-main cubic dirac}
  Let $z \in \CC$, $|z|=1$, and $M \g 0$. For any $A\g 1$ there exists
  $\epsilon=\epsilon(A)>0$ such that for all initial data satisfying
        \[\| \psi(0) \|_{H^1(\RR^3)} \les A\; \text{ and }\; \| \psi(0) + z \gamma^2 \psi^*(0) \|_{H^1(\RR^3)} \les \epsilon,\]
        the cubic Dirac equation \eqref{eqn-cubic dirac} is globally well-posed and solutions
        scatter to free solutions as
        $t \rightarrow \pm \infty$.
\end{theorem}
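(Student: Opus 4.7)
The plan is to decompose the initial data as the sum of an exact-Majorana part and a small defect, and to control the full solution as a perturbation of the free Dirac evolution of the Majorana part. First, using a bounded real-linear projection onto the Majorana subspace $\{\phi:\phi+z\gamma^2\phi^*=0\}$, I would write
\[
\psi(0)=\psi_M(0)+\delta,\qquad \psi_M(0)+z\gamma^2\psi_M^*(0)=0,\qquad \|\psi_M(0)\|_{H^1}\lesa A,\qquad \|\delta\|_{H^1}\lesa\epsilon.
\]
The Majorana condition is preserved by the free Dirac flow, so the corresponding free Dirac solution $\psi_M(t)$ satisfies this algebraic identity for all $t$, and hence $\overline{\psi_M}\psi_M\equiv 0$; in particular $\psi_M$ is itself a global solution of \eqref{eqn-cubic dirac}. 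Because $\psi_M$ is a free Dirac solution with $H^1$ data, its norm in the resolution space $X$ used for the small-data theory of \cite{Bejenaru2014a,Bejenaru2015b,Bournaveas2015} is globally finite, controlled by some $\Lambda=\Lambda(A)$.

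Writing $\psi=\psi_M+\eta$ with $\eta(0)=\delta$, the perturbation solves
\[
-i\gamma^\mu\p_\mu\eta+M\eta=(\overline{\psi_M}\eta)\psi_M+(\overline{\eta}\psi_M)\psi_M+R_2(\psi_M,\eta)+(\overline{\eta}\eta)\eta,
\]
where $R_2$ collects the three terms linear in $\psi_M$ and quadratic in $\eta$. The trilinear estimates of \cite{Bejenaru2014a,Bejenaru2015b,Bournaveas2015} provide $\|(\overline{u_1}u_2)u_3\|_{N}\lesa \prod_i\|u_i\|_X$; applied here, the cubic and quadratic-in-$\eta$ contributions are readily absorbed provided $\|\eta\|_X$ is small. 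The main obstacle is the pair of terms $(\overline{\psi_M}\eta)\psi_M$ and $(\overline{\eta}\psi_M)\psi_M$: they are linear in $\eta$ but carry two copies of the large solution $\psi_M$, giving a factor $\|\psi_M\|_X^2\sim\Lambda^2$ that cannot be absorbed in a single step.

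I would resolve this via a finite-time partition and bootstrap argument. Since $\|\psi_M\|_{X(\RR)}\lesa\Lambda$ is finite, one can partition $\RR=\bigcup_{k=1}^{N(A)}I_k$ into finitely many consecutive intervals on which $\|\psi_M\|_{X(I_k)}\le\eta_0$, for $\eta_0$ a small absolute constant fixed by the constants in the trilinear estimates. On each $I_k$ the dangerous pair carries the absorbable factor $\eta_0^2$, and a standard contraction on $X(I_k)$ yields $\|\eta\|_{X(I_k)}\lesa\|\eta(t_k)\|_{H^1}$, propagating the bound $\|\eta(t_{k+1})\|_{H^1}\le C\|\eta(t_k)\|_{H^1}$. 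Iterating across all $N(A)$ intervals produces $\|\eta\|_{X(\RR)}\le C^{N(A)}\epsilon$, which lies below the bootstrap threshold provided $\epsilon\le\epsilon(A):=\eta_0\,C^{-N(A)}$. Scattering for $\eta$ to a free Dirac solution then follows in the usual way from the trilinear estimates and the global bound on $\|\eta\|_{X(\RR)}$, and combining with the free scattering of $\psi_M$ gives scattering of $\psi=\psi_M+\eta$ as $t\to\pm\infty$.
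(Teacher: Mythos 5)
Your decomposition $\psi=\psi_M+\eta$ and the resulting perturbed equation are set up correctly, and you correctly identify the obstacle: the terms $(\overline{\psi_M}\eta)\psi_M$ and $(\overline{\eta}\psi_M)\psi_M$ are linear in $\eta$ but carry two large factors of $\psi_M$. However, the proposed fix --- partitioning $\RR$ into finitely many intervals $I_k$ with $\|\psi_M\|_{X(I_k)}\le\eta_0$ --- does not work. The resolution space $X$ of \cite{Bejenaru2014a,Bournaveas2015} embeds continuously into $C_b(\RR,H^1)$, so for the free solution $\psi_M$ one has $\|\psi_M\|_{X(I)}\gtrsim\|\psi_M(0)\|_{H^1}\sim A$ on \emph{every} nonempty subinterval $I$, no matter how short. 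Such $U^p$/$V^p$-type norms are not divisible in time the way $L^p_tL^q_x$ norms with $p<\infty$ are, so there is no finite partition of $\RR$ making the restricted norm small, and the single-interval contraction you invoke never gets off the ground. (Indeed the paper faces exactly this issue in the Dirac--Klein--Gordon case, and resolves it only by importing \emph{refined} bilinear estimates from \cite{Candy2017a} that exhibit a small power of the genuinely divisible $L^4_{t,x}$ norm; your sketch would need something analogous, and it is not clear such a refinement of the trilinear estimate is available or would suffice, since here \emph{both} large factors $\psi_M$ would need to land in a divisible norm.)

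The paper anticipates precisely the linearisation you attempt, explains why it leads to trouble, and then takes a genuinely different route. Instead of linearising around the large Majorana free solution, it writes $\psi=\varphi+\chi$ where $\varphi(0)=\tfrac12(\psi(0)+z\gamma^2\psi^*(0))$ is small and $\chi(0)=\tfrac12(\psi(0)-z\gamma^2\psi^*(0))$ is large, and lets \emph{both} $\varphi$ and $\chi$ solve the coupled nonlinear system \eqref{eqn-cubic dirac II} (not the free equation). Because $\varphi$ satisfies the Majorana condition and $\chi$ the anti-Majorana condition $\chi-z\gamma^2\chi^*=0$ for all time, one has $\overline{\varphi}\varphi\equiv\overline{\chi}\chi\equiv 0$, so the coupling $\overline{\varphi}\chi+\overline{\chi}\varphi$ always contains \emph{exactly one} copy of the small spinor $\varphi$. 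The dangerous $\chi^2$ self-interaction vanishes identically; the worst term in the $\chi$-equation is $\varphi\chi\chi\sim\epsilon A^2$, and a single global contraction (with a weighted product norm $\epsilon^{-1}\|\varphi\|_X+A^{-1}\|\chi\|_X$) closes once $\epsilon\lesssim A^{-1}$. No time partition and no refined estimates are required.
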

To be more precise, we prove Theorem \ref{thm-cubic dirac reduced} on
a reduced system instead, which is equivalent for smooth solutions.
      In Theorem \ref{thm-main cubic dirac} we are forced to take
      $\epsilon$ much smaller than $A^{-1}$.  The regularity assumption in Theorem
      \ref{thm-main cubic dirac} is sharp, in the sense that
      $\dot{H}^1(\RR^3)$ is the scale invariant space. In particular,
      the regularity assumptions match the optimal results known in
      the small data case \cite{Bejenaru2014a, Bournaveas2015}.  The
      importance of Theorem \ref{thm-main cubic dirac} is that we can
      take $A$ to be large, in particular, we obtain scattering
      for an open set of large data with essentially sharp regularity
      assumptions. Under stronger decay and regularity conditions,
      such results have been proven by Bachelot in
      \cite{Bachelot1989}. Very recently, a similar result has been
      derived in \cite{D'Ancona2017} in the presence of a time
      independent potential and for initial data in $H^1(\RR^3)$ with
      additional angular regularity.

      We also have the corresponding version for the
      Dirac-Klein-Gordon system. Let $H^s_\sigma(\RR^3) = (1-\Delta_{\sph^2})^{-\frac{\sigma}{2}}H^s(\RR^3)$ be the
      subspace of the standard Sobolev space $H^s(\RR^3)$ containing functions with $\sigma$ angular derivatives in $H^s(\RR^3)$, equipped with the norm 
        $$ \| f \|_{H^s_\sigma} = \| ( 1 -\Delta_{\sph^2})^{\frac{\sigma}{2}} f \|_{H^s}$$
       see  \cite{Candy2016, Candy2017a} for details. Note that $H^s(\RR^3)=H^s_0(\RR^3)$.

\begin{theorem}\label{thm-main dkg}
  Let $z \in \CC$, $|z|=1$. Suppose that either $s>0=\sigma$ and
  $2M > m > 0 $, or $\sigma>0=s$ and $M,m>0$. For any $A\g 1$, there
  exists $\epsilon=\epsilon (A)>0$, such that if
    \[  \| \psi(0) \|_{H^s_\sigma(\RR^3)}\les A, \quad \| \phi(0)
      \|_{H^{\frac{1}{2}+s}_\sigma(\RR^3)} \les A, \quad \| \p_t \phi(0) \|_{H^{-\frac{1}{2}+s}_\sigma(\RR^3)} \les A, \]
    and
    \[\big\|\psi(0) + z \gamma^2 \psi^*(0) \big\|_{H^s_\sigma (\RR^3)}  \les \epsilon, \]
    then the system \eqref{eqn-DKG} is globally well-posed and solutions
    scatter  to free solutions as $t \rightarrow \pm \infty$.
  \end{theorem}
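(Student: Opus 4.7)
The plan is to linearize around a Chadam--Glassey reference solution that satisfies the Majorana condition exactly. Since $|z|=1$ and the standard Dirac matrices satisfy $\gamma^2\overline{\gamma^2}=I$ together with $\gamma^2(\gamma^\mu)^*=-\gamma^\mu\gamma^2$, the map $\psi\mapsto z\gamma^2\psi^*$ is an $\RR$-linear involution and commutes (up to sign) with the action of the Dirac operator paired with any real potential. Consequently, the projection $P\psi=\tfrac{1}{2}(\psi-z\gamma^2\psi^*)$ splits the initial data as $\psi(0)=\psi_0(0)+u_0$ with $\psi_0(0)=P\psi(0)$ satisfying the Majorana condition, $\|\psi_0(0)\|_{H^s_\sigma}\les A$, and $\|u_0\|_{H^s_\sigma}\les\epsilon$. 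I would then define the reference pair by letting $\phi_0$ be the free Klein--Gordon evolution of $(\phi(0),\p_t\phi(0))$ and letting $\psi_0$ solve the \emph{linear} equation
\[
-i\gamma^\mu\p_\mu\psi_0+M\psi_0=\phi_0\psi_0,\qquad\psi_0(0)=P\psi(0).
\]
The invariance noted above then ensures that the Majorana condition is preserved along this flow, so $\overline{\psi_0}\psi_0\equiv 0$, and hence $(\psi_0,\phi_0)$ is automatically a global solution of \eqref{eqn-DKG} with a free Klein--Gordon component.

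The global critical-space bound on $\psi_0$ depending only on $A$ is obtained by exploiting the linearity. The free Klein--Gordon solution $\phi_0$ enjoys global Strichartz-type bounds controlled by $A$, and in particular a finite $L^4_{t,x}$-norm in the regularity classes relevant to \cite{Candy2016,Candy2017a} (using angular regularity in the $\sigma>0$ regime). Invoking the refined bilinear estimates of \cite{Candy2017a}, which gain a small power of $\|\phi_0\|_{L^4_{t,x}}$ in the control of the interaction $\phi_0\psi_0$, I would partition $\RR$ into $N=N(A)$ intervals on which that norm is as small as needed. On each interval a standard fixed-point argument in the critical resolution space used in \cite{Candy2016,Candy2017a} yields a solution $\psi_0$ whose norm grows by at most a bounded multiplicative factor; concatenating yields a global bound of order $C(A)$.

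Setting $\psi=\psi_0+u$, $\phi=\phi_0+v$ and using $\overline{\psi_0}\psi_0=0$, the perturbation satisfies
\begin{align*}
-i\gamma^\mu\p_\mu u+Mu &= \phi_0 u + v\psi_0 + vu,\\
\Box v+m^2 v &= \overline{\psi_0}u+\overline{u}\psi_0+\overline{u}u,
\end{align*}
with initial data of size $\les\epsilon$. The main obstacle is that the \emph{linear} terms $\phi_0 u$, $v\psi_0$, $\overline{\psi_0}u$, $\overline{u}\psi_0$ carry the \emph{large} reference coefficients, so a single contraction against the free flow is hopeless. I would overcome this by the same partitioning device: refine the previous partition so that on each sub-interval the norms of $\phi_0$ and $\psi_0$ that enter the bilinear estimates of \cite{Candy2017a} are arbitrarily small. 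On such an interval the large-coefficient linear terms become contractive perturbations, while the purely quadratic terms $vu$ and $\overline{u}u$ are absorbed by smallness of $(u,v)$ itself. A bootstrap then propagates $\|(u,v)\|\les\epsilon$ across the $N$ intervals, losing a multiplicative factor $C$ at each step, and the resulting bound $\|(u,v)\|\les C^N\epsilon$ is made acceptable by choosing $\epsilon=\epsilon(A)\les C^{-N(A)}$. Scattering is inherited from the uniform critical-space bounds on $\psi_0,\phi_0,u,v$ in the usual way, and both regimes ($s>0=\sigma$ with $2M>m$, and $\sigma>0=s$) are treated identically by invoking the corresponding small-data machinery from \cite{Candy2016,Candy2017a}.
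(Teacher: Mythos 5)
Your plan is the ``linearize around a Chadam--Glassey reference'' strategy: take $\psi_0$ solving the linear Dirac equation with the free real potential $\phi_0$, observe via Lemma \ref{lem-chadam glassey} that $\overline{\psi_0}\psi_0\equiv 0$ so $(\psi_0,\phi_0)$ is an exact DKG solution, and then run a perturbation argument for $(u,v)=(\psi-\psi_0,\phi-\phi_0)$. This is precisely the approach that the paper discusses and rejects at the start of Section \ref{sec:red} (``One way to proceed would be to linearise around the Chadam--Glassey type solutions\dots It turns out that there is a better way to decompose $\psi$''), with the explicit remark that the same comment, with an extra difficulty, applies to DKG. The gap is the term $v\psi_0$ in the $u$-equation. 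The only tool in \cite{Candy2017a} for a scalar-potential-times-spinor source is the estimate in Lemma~\ref{lem:norms}~\eqref{it:nonl}, which gives
\[
\|\mc{I}^M_{t_0}(\Re(v)\,\psi_0)\|_{\mb{F}^{s,\sigma}_M(I)}\lesssim \|v\|_{\mb{D}^s_\sigma(I)}^{\theta}\,\|v\|_{\mb{V}^{s+\frac12,\sigma}_{+,m}(I)}^{1-\theta}\,\|\psi_0\|_{\mb{F}^{s,\sigma}_M(I)}.
\]
Of these three factors, only $\|v\|_{\mb{D}^s_\sigma(I)}$ decreases as $I$ is shortened; $\|v\|_{\mb{V}}$ and $\|\psi_0\|_{\mb{F}}$ control $L^\infty_t H^s_\sigma$ and do not improve by subdividing time. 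Your claim that one may ``refine the partition so that the norms of $\phi_0$ and $\psi_0$ that enter the bilinear estimates are arbitrarily small'' is therefore not available: $\psi_0$ sits in the $\mb{F}$-slot, whose local norm is always $\gtrsim A$ (indeed it grows geometrically across the partition). Already at the level of a single interval, the coupled bootstrap
\[
\|u\|\lesssim \epsilon+\|v\|\,\|\psi_0\|,\qquad \|v\|\lesssim \|\psi_0\|\,\|u\|+\|u\|^2
\]
implies $\|u\|\lesssim \epsilon+A^2\|u\|$, which closes only for $A\lesssim 1$, i.e.\ recovers nothing beyond the small-data regime. The only potential rescue is to make $\|v\|_{\mb{D}}$ small by subdividing on $v$, but $v$ is part of the unknown and---unlike $\phi_+$ in the paper's proof---is not close to a fixed free wave whose $L^4_{t,x}$-norm could be partitioned a priori; this circularity is exactly the ``significant additional difficulty'' the paper flags for DKG.

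By contrast, the paper's decomposition $\psi=\varphi+\chi$ with $\varphi(0)=\tfrac12(\psi(0)+z\gamma^2\psi^*(0))$, $\chi(0)=\tfrac12(\psi(0)-z\gamma^2\psi^*(0))$ evolves \emph{both} pieces by the Dirac equation with the \emph{same} potential $\Re(\phi_+)$, so no reference solution and no $v\psi_0$-type term appears. Because $\overline{\varphi}\varphi=\overline{\chi}\chi=0$ propagates, the Klein--Gordon source is only the cross term $\overline{\varphi}\chi+\overline{\chi}\varphi\sim\epsilon A$; thus $\phi_+$ stays close to its initial free evolution, whose $\mb{D}^s_\sigma$-norm is known in advance and can be partitioned. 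The large spinor $\chi$ enters the nonlinear estimates only through the KG source, always paired with the small $\varphi$---never as an $\mb{F}$-factor multiplying a perturbative potential---and this is what makes the iteration close. If you want to salvage a linearization-style proof you would need a bilinear estimate that gains a small $L^4$-type norm of the \emph{spinor} factor, which is not among the tools supplied by Lemma \ref{lem:norms}.
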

As for the cubic Dirac equation, we prove Theorem \ref{thm-dkg
  reduced} on a reduced system instead, which is equivalent for smooth solutions.

We obtain an upper bound for $\epsilon$ which is the inverse
exponential of a power of $A$, see
Theorem \ref{thm-dkg reduced} for more details.
  The Chadam-Glassey result in
  \cite{Chadam1974} corresponds to the case $z=i$ and $\epsilon = 0$
  (with additional smoothness assumptions on the data). A result similar to Theorem \ref{thm-main dkg} under strong decay and regularity conditions has been established in \cite{Bachelot1988}.
  Notice that the small data results in \cite{Bejenaru2014a,
    Bournaveas2015, Candy2016} correspond to
  Theorems \ref{thm-main cubic dirac} and \ref{thm-main dkg}, respectively, in the case where $A$ is very small,
  since it clearly implies the condition on  $\psi(0) + z \gamma^2 \psi^*(0)$. Notice that
   $s=0$ is the critical regularity for \eqref{eqn-DKG}.

\subsection*{Organisation of the paper}
In Section \ref{sec:red} we
 perform an initial reduction which decouples the small and
 the large parts of the spinors. In Section \ref{sec:cd} we reformulate
 and prove the main results concerning the Soler model. In Section \ref{sec:dkg} we reformulate
 and prove the main results on the Dirac-Klein-Gordon system.

\section{Initial reductions}\label{sec:red}

Suppose we have data $\psi(0)$ satisfying the assumptions
  of Theorem \ref{thm-main cubic dirac}. One way to proceed would be
  to linearise around the Chadam-Glassey type solutions. Thus
  decomposing
            \[ \psi(0) = \psi_N(0) + \psi_L(0)\]
            where $\| \psi_N(0) \|_{H^1} \les \epsilon$ and
            $\psi_L(0)+ z \gamma^2 \psi_L^*(0)=0$. Let $\psi_L$ denote
            the solution to the linear Dirac equation with data
            $\psi_L(0)$. As mentioned in the introduction, for all times we have
            $\overline{\psi}_L \psi_L = 0$. Consequently, the
            remaining term $\psi_N = \psi - \psi_L$ satisfies the
            equation
    \[  - i \gamma^\mu \p_\mu \psi_N + M \psi_N = \big( \overline{\psi}_L \psi_N + \overline{\psi}_N \psi_L \big) \psi +  \overline{\psi}_N \psi_N \psi.\]
    The last term is small since $\psi_N(0)$ is small. On the other
    hand, it is not at all clear that the first term
    $\big( \overline{\psi}_L \psi_N + \overline{\psi}_N \psi_L \big)
    \psi$
    should be small, since it contains terms of the schematic form
    $\psi_L^2 \psi_N$, and $\psi_L$ can be large. In particular, if we
    wanted to use the linearised equation to prove Theorem
    \ref{thm-main cubic dirac}, we would be forced to absorb these
    terms into the left hand side, which would significantly
    complicate the required multilinear estimates.
    It turns out that there is a better way to
    decompose $\psi$, which avoids this problem. In particular, we can exploit
    the multilinear estimates already contained in
    \cite{Bejenaru2014a, Bournaveas2015}.
    A similar comment applies to
    the proof for the Dirac-Klein-Gordon system, Theorem \ref{thm-main
      dkg}. However, a significant additional difficulty arises in the
    case where the data for $\phi$ is large.

 We start with the following observation, see \cite{Majorana1937,Lochak1985,Chadam1974}, we follow \cite{Ozawa2004}.

\begin{lemma}\label{lem-chadam glassey}
  Assume that $\psi$ is a classical solution of
	\[ - i \gamma^\mu \p_\mu \psi + M \psi = V \psi \]
        for some real-valued, scalar, and locally integrable function
        $V:\RR^{1+3} \rightarrow \RR$. Then for any $z \in \CC$ we
        have
	\[ \| \psi(t) + z \gamma^2 \psi^*(t) \|_{L^2_x} = \| \psi(0) +z \gamma^2 \psi^*(0) \|_{L^2_x}.\]
      \end{lemma}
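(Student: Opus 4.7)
The plan is to reduce the claim to the familiar $L^2$ conservation law for a Dirac equation with a real-valued scalar potential. Concretely, setting $\phi := \psi + z\gamma^2 \psi^*$, I would first show that $\phi$ satisfies the same equation
\[ - i \gamma^\mu \p_\mu \phi + M \phi = V \phi \]
as $\psi$, and then invoke the standard conservation of the Dirac current for $\phi$. The proof is essentially algebraic, reflecting the fact that (up to phase) $\gamma^2$ implements charge conjugation, which preserves Dirac equations with real scalar potentials.

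For the first step, I would take the complex conjugate of the equation for $\psi$, using that $M, V$ are real, to obtain
\[ i (\gamma^\mu)^* \p_\mu \psi^* + M \psi^* = V \psi^*,\]
and then multiply on the left by $z\gamma^2$. The key algebraic input is the identity
\[ \gamma^2 (\gamma^\mu)^* = - \gamma^\mu \gamma^2 \qquad (\mu=0,1,2,3), \]
which in the standard Dirac representation is immediate: $\gamma^0, \gamma^1, \gamma^3$ are real while $\gamma^2$ is purely imaginary, so for $\mu \neq 2$ the identity reduces to the anticommutation relation $\{\gamma^2,\gamma^\mu\}=0$, while for $\mu=2$ both sides equal $-(\gamma^2)^2$. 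Inserting this identity yields
\[ - i \gamma^\mu \p_\mu (z \gamma^2 \psi^*) + M (z \gamma^2 \psi^*) = V (z \gamma^2 \psi^*), \]
and adding to the original equation for $\psi$ gives the claimed equation for $\phi$.

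For the second step, I would use the familiar conserved current $J^\mu := \phi^\dagger \gamma^0 \gamma^\mu \phi$. A direct calculation, using $(\gamma^\mu)^\dagger \gamma^0 = \gamma^0 \gamma^\mu$ together with the equation for $\phi$ and the reality of $V-M$, shows that $\p_\mu J^\mu = 0$. Since $J^0 = |\phi|^2$, integrating in space and using sufficient decay of classical solutions gives $\frac{d}{dt}\|\phi(t)\|_{L^2_x}^2 = 0$, which is exactly the claim. There is no genuine analytic obstacle here: the entire content is the algebraic observation that $z\gamma^2 \psi^*$ solves the same Dirac equation as $\psi$; once this is in place, the conclusion is the standard charge conservation law. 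The only point requiring minor care is the regularity needed to justify the spatial integration by parts, which is automatic for classical solutions and can be extended by approximation to the setting of the lemma.
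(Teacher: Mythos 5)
Your proposal is correct and follows essentially the same approach as the paper: the identity $\gamma^2(\gamma^\mu)^* = -\gamma^\mu\gamma^2$ is the same algebraic input (written in the paper as $\gamma^\mu\gamma^2 = -\gamma^2(\gamma^\mu)^*$), and it is used in the same way to show that $\psi + z\gamma^2\psi^*$ solves the same Dirac equation, after which $L^2$ conservation is the standard charge-conservation computation (the paper multiplies by $i(\psi+z\gamma^2\psi^*)^\dagger\gamma^0$, takes the real part, and integrates, which is exactly your statement $\partial_\mu J^\mu=0$ integrated in $x$).
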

      \begin{proof}
        A computation shows that
        $ \gamma^\mu \gamma^2 = - \gamma^2 (\gamma^\mu)^*$ which
        implies that
	\[ - i\gamma^\mu \p_\mu \big( \psi + z \gamma^2 \psi^*\big) =  - i \gamma^\mu \p_\mu \psi  + z \gamma^2 \big(  - i \gamma^\mu \p_\mu \psi)^*= - M ( \psi +  z \gamma^2 \psi^*) + V( \psi + z \gamma^2 \psi^*).\]
        Result now follows by multiplying by
        $ i (\psi + z \gamma^2 \psi^*)^\dagger \gamma^0$, taking the
        real part, and then integrating over $\RR^3$.
      \end{proof}

      We can now rewrite the cubic Dirac equation \eqref{eqn-cubic
        dirac}. Let $\varphi, \chi: \RR^{1+3} \rightarrow \CC^4$ be smooth enough and
      solve
      \begin{equation}\label{eqn-cubic dirac II}
        \begin{split}
          - i \gamma^\mu \p_\mu \varphi + M \varphi &= \big( \overline{\varphi} \chi  + \overline{\chi} \varphi\big) \varphi \\
          - i \gamma^\mu \p_\mu \chi + M \chi &= \big(
          \overline{\varphi} \chi + \overline{\chi} \varphi\big) \chi
        \end{split}
      \end{equation}
      with data
    \[ \varphi(0) = \frac{1}{2} \big( \psi(0) + z \gamma^2 \psi^*(0) \big), \qquad \chi(0) = \frac{1}{2} \big( \psi(0) - z \gamma^2 \psi^*(0)\big).\]
    Then a computation using Lemma \ref{lem-chadam glassey} implies
    that for all $t \in \RR$ and $|z|=1$ we have
    \[ \varphi(t) + z \gamma^2 \varphi^*(t)=0, \qquad \chi(t)  -  z \gamma^2 \chi^*(t)=0\]
    and moreover that
    $\overline{\varphi}\varphi = \overline{\chi} \chi = 0$.
    Consequently, if we let $\psi = \varphi + \chi$, we obtain a
    solution to the cubic Dirac equation (\ref{eqn-cubic
      dirac}). Similarly, in the case of the Dirac-Klein-Gordon system
    \eqref{thm-main dkg}, let
    $\varphi, \chi: \RR^{1+3} \rightarrow \CC^4$ and
    $\phi: \RR^{1+3} \rightarrow \RR$ be smooth enough and solve
    \begin{equation}\label{eqn-dkg-sec}
      \begin{split}
        - i \gamma^\mu \p_\mu \varphi + M \varphi &= \phi \varphi \\
        - i \gamma^\mu \p_\mu \chi + M \chi &= \phi \chi \\
        \Box \phi + m^2 \phi &= \overline{\varphi} \chi +
        \overline{\chi} \varphi
      \end{split}
    \end{equation}
    with data
\[\varphi(0) = \frac{1}{2} \big( \psi(0) + z \gamma^2 \psi^*(0) \big), \qquad \chi(0) = \frac{1}{2} \big( \psi(0) - z \gamma^2 \psi^*(0)\big)
.\]
 As in the case of the cubic Dirac equation, an application of
        Lemma \ref{lem-chadam glassey} implies that
\[ \varphi(t) + z \gamma^2 \varphi^*(t)=0, \qquad \chi(t) -  z \gamma^2 \chi^*(t)=0\]
and hence provided $|z|=1$ we have
$\overline{\varphi}\varphi = \overline{\chi} \chi = 0$. Consequently,
letting $\psi = \varphi + \chi$ we get a solution to \eqref{eqn-DKG}.
For technical reasons, we prefer to work with a first order system. Defining $\phi_+=\phi+i\lr{\nabla}^{-1}\partial_t \phi$, as $\phi$ is real-valued, we obtain
\begin{equation}\label{eqn-dkg II}
      \begin{split}
        - i \gamma^\mu \p_\mu \varphi + M \varphi &= \Re(\phi_+) \varphi \\
        - i \gamma^\mu \p_\mu \chi + M \chi &=  \Re(\phi_+) \chi \\
        -i\partial_t \phi_++\lr{\nabla}_m  \phi_+ &=\lr{\nabla}_{m}^{-1}\big( \overline{\varphi} \chi +
        \overline{\chi} \varphi\big)
      \end{split}
    \end{equation}
with data
\begin{align*}&\varphi(0) = \frac{1}{2} \big( \psi(0) + z \gamma^2 \psi^*(0) \big), \quad \chi(0) = \frac{1}{2} \big( \psi(0) - z \gamma^2 \psi^*(0)\big),\\
\text{and }  &\phi_+(0)=\phi(0)+i\lr{\nabla}^{-1}\partial_t \phi(0).
\end{align*}
Conversely, from $\phi_+$ we can recover $\phi$ by taking the real part of $\phi_+$.

\section{Cubic Dirac equation}\label{sec:cd}
We begin by introducing some notation. Let $\Pi_\pm$ be the projection
\[
\Pi_\pm = \tfrac{1}{2} \big( I \pm \lr{\nabla}_M^{-1} (   - i \gamma^0 \gamma^j \p_j + M \gamma^0) \big),
\]
let $\mc{U}^{\pm}_m(t)=e^{\mp i t \lr{\nabla}_m}$ be the propagator for the
homogeneous half-wave equation, let \[\mc{U}_{M}(t)=\mc{U}^+_M(t)\Pi_+ + \mc{U}^{-}_M(t)\Pi_-\] be the propagator for the homogeneous Dirac equation, and let
\begin{align*}
\mc{I}^{\pm,m}_{t_0}(F)(t)=&i\int_{t_0}^t \mc{U}^{\pm}_m(t-t_0-t') F(t')dt',\\
\mc{I}^{M}_{t_0}(G)(t)=&i\int_{t_0}^t \mc{U}_{M}(t-t_0-t') \gamma^0 G(t')dt'.
\end{align*}
be the corresponding Duhamel integrals.

The previous section implies that for smooth solutions \eqref{eqn-cubic dirac} and  \eqref{eqn-cubic dirac II} are equivalent, so that we focus on proving the following.

\begin{theorem}\label{thm-cubic dirac reduced}
  Let $z \in \CC$, $|z|=1$, and $M\g 0$.  There exists $c\in (0,1)$,
  such that for any $A>0$ and
  $\epsilon\les cA^{-1}$, if the initial data satisfy
        \[\| \varphi(0) \|_{H^1} \les \epsilon, \qquad  \| \chi(0) \|_{H^1} \les A,\]
        then \eqref{eqn-cubic dirac II} is globally well-posed and the solutions
         scatter in $H^1(\RR^3)$ to free solutions as
        $t \rightarrow \pm \infty$,
    i.e. there exist $\varphi_{\pm\infty}\in H^1(\RR^3)$ and
    $\chi_{\pm\infty}\in H^1(\RR^3)$, such that
\[\lim_{t \to \pm\infty}\|\varphi(t)-\mc{U}_{M}(t)
\varphi_{\pm\infty}\|_{H^1}=0 \text{ and } \lim_{t \to \pm\infty}\|\chi(t)-\mc{U}_{M}(t)
\chi_{\pm\infty}\|_{H^1}=0.\]
      \end{theorem}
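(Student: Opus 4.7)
The plan is to run a bootstrap argument for the coupled system \eqref{eqn-cubic dirac II} in the critical resolution space used for the small-data theory of \cite{Bejenaru2014a,Bournaveas2015}; denote it $X$. This space is built from $U^p, V^p$-type atoms adapted to the characteristic cones of the Dirac operator, embeds continuously into $C_t H^1$, satisfies $\|\mc{U}_M(t)f\|_X \les \|f\|_{H^1}$, and — most importantly — supports a trilinear estimate
\[
\bigl\|\mc{I}^M_0\bigl((\overline{\psi_1}\psi_2)\psi_3\bigr)\bigr\|_X \les \|\psi_1\|_X \|\psi_2\|_X \|\psi_3\|_X
\]
for arbitrary spinors. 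This bound already encodes the full null structure of the Soler nonlinearity and is proved in the cited works; I import it verbatim and apply it in a regime where two of the three factors are small and one is large.

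The decisive structural fact is the one established in Section~\ref{sec:red}: the reduced system propagates $\overline{\varphi}\varphi = \overline{\chi}\chi = 0$, so the full cubic $(\overline{\psi}\psi)\psi$ with $\psi = \varphi + \chi$ collapses to $(\overline{\varphi}\chi + \overline{\chi}\varphi)(\varphi + \chi)$. Split along the two equations, the $\varphi$-nonlinearity carries exactly two factors of the small $\varphi$ and one of the large $\chi$, while the $\chi$-nonlinearity carries exactly two factors of $\chi$ and one of $\varphi$. The pure $\chi^3$ contribution, which could never close for large $\chi$, is simply absent. Combining Duhamel with the trilinear estimate therefore gives
\[
\|\varphi\|_X \les C\epsilon + C\|\varphi\|_X^2 \|\chi\|_X, \qquad \|\chi\|_X \les CA + C\|\varphi\|_X \|\chi\|_X^2.
\]
A continuity argument on the ansatz $\|\varphi\|_X \les 2C\epsilon$ and $\|\chi\|_X \les 2CA$ closes provided $8C^3\epsilon A \les 1$, which is exactly the smallness assumption $\epsilon \les cA^{-1}$. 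Uniqueness and continuous dependence follow from the analogous trilinear bound on differences. Scattering is then automatic from the construction of $X$: the same trilinear estimate controls the Duhamel integral in $X$ globally in time, so $\mc{U}_M(-t)\varphi(t)$ and $\mc{U}_M(-t)\chi(t)$ are Cauchy in $H^1$ as $t \to \pm\infty$, defining the scattering states $\varphi_{\pm\infty}$ and $\chi_{\pm\infty}$.

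The main obstacle I anticipate is essentially a bookkeeping one: one must verify that the trilinear estimate from \cite{Bejenaru2014a,Bournaveas2015} is genuinely trilinear in three independent slots, rather than, say, diagonal in one of them, so that it can be applied with two small inputs and one large input without any smallness penalty on the large one. Once that is in place, the entire mechanism is driven by the algebraic cancellation $\overline{\varphi}\varphi = \overline{\chi}\chi = 0$ supplied by Section~\ref{sec:red}; without it, the $\chi$-equation would contain a genuinely cubic-in-large contribution at the critical regularity, which is exactly the obstruction that keeps the general large-data problem for \eqref{eqn-cubic dirac} open.
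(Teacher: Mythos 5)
Your proposal is correct and follows essentially the same route as the paper: the same resolution space $X$ with the trilinear Duhamel estimate from \cite{Bejenaru2014a,Bournaveas2015}, the same exploitation of $\overline{\varphi}\varphi=\overline{\chi}\chi=0$ to eliminate the dangerous $\chi^3$ term, and the same threshold $\epsilon\lesssim A^{-1}$. The only stylistic difference is that you close via a bootstrap/continuity argument with ansatz $\|\varphi\|_X\les 2C\epsilon$, $\|\chi\|_X\les 2CA$, whereas the paper runs a single contraction mapping on the product ball using the weighted norm $\epsilon^{-1}\|\varphi\|_X+A^{-1}\|\chi\|_X$, which delivers existence, uniqueness and continuous dependence in one stroke; the two are equivalent for this fixed-point problem and both hinge on the same multilinear estimate with three independent slots, as you correctly flag.
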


\begin{proof}
  Let $X \subset C(\RR, H^1(\RR^3))$ be the Banach space constructed
  in \cite{Bejenaru2014a} in the massive case ($M>0$) and in \cite{Bournaveas2015} in the massless case ($M=0$). Further, let $\|\cdot\|_X$ denote the norm obtained by multiplying by the norms from \cite{Bejenaru2014a,Bournaveas2015} by a small enough constant, such that for all solutions $\varphi\in X$ to the inhomogeneous Dirac equation
    \[ - i \gamma^\mu \p_\mu \varphi + M \varphi = \big(\overline{\varphi^{(1)}} \varphi^{(2)} \big) \varphi^{(3)}\]
the bound
  \begin{equation}\label{eqn-thm cubic dirac-X bound} \| \varphi \|_X
    \les \| \varphi(0) \|_{H^1(\RR^3)} + C \| \varphi^{(1)} \|_X \|
    \varphi^{(2)} \|_X \| \varphi^{(3)} \|_X \end{equation}
  holds. Consider the set
	\[\mc{X}= \big\{ (\varphi, \chi) \in X \times X \, \big| \,  \| \varphi \|_X \les 2 \| \varphi(0) \|_{H^1}, \| \chi \|_X \les 2 \| \chi(0) \|_{H^1}\big\}\]
and, for $A,\epsilon>0$, the  norm \[ \|(\varphi, \chi)\|_\mc{X}=\epsilon^{-1}\| \varphi \|_X+A^{-1}\| \chi \|_X. \]
$\mc{X}$ is a complete metric space.        Let $\mc{T} = (T_1, T_2)$ denote the standard (inhomogeneous)
        solution map for \eqref{eqn-cubic dirac II} constructed from
        the Duhamel formula. The bound \eqref{eqn-thm cubic dirac-X
          bound} together with the assumption on the initial data
        show that if $(\varphi, \chi) \in \mc{X}$ then
\begin{align*}
\| T_1(\varphi,\chi)\|_X \les{}& \| \varphi(0) \|_{H^1} + 2C \| \varphi\|_{X}^2 \| \chi\|_{X} \les \| \varphi(0) \|_{H^1} + 2^4C \| \varphi(0)\|_{H^1}^2 \| \chi(0)\|_{H^1}\\
\les{}& ( 1 + 2^4C A \epsilon) \| \varphi(0) \|_{H^1},
\end{align*}
and similarly
	\[\| T_2(\varphi, \chi)\|_X \les \| \chi(0) \|_{H^1} + 2C \| \chi\|_{X}^2 \| \varphi\|_{X} \les ( 1 + 2^4 C A \epsilon) \| \chi(0) \|_{H^1}.\]
        Consequently, provided that $\epsilon \leq (2^4 C A)^{-1}$, we see
        that $\mc{T}: \mc{X} \rightarrow \mc{X}$. Next, we verify that $\mc{T}$ is a contraction.
        For $(\varphi_1,\chi_1),(\varphi_2,\chi_2)\in \mc{X} $ another application of \eqref{eqn-thm cubic
          dirac-X bound} gives
   \[ \| T_1(\q_{1},\chi_1) - T_1(\q_{2},\chi_2) \|_X \les  2^4 C A \epsilon \| \q_1 - \q_{2} \|_X + 2^3 C \epsilon^2 \| \chi_1 - \chi_{2}\|_X, \]
        and similarly
	\[ \| T_2(\q_{1},\chi_{1}) - T_2(\q_{2},\chi_{2}) \|_X \les 2^4 C A \epsilon \| \chi_1 - \chi_{2} \|_X + 2^3 C A^2 \| \q_1 - \q_{2}\|_X.\]
This implies
\[\| \mc{T}(\q_{1},\chi_1) - \mc{T}(\q_{2},\chi_2) \|_{\mc{X}} \les 2^6 CA \epsilon \| (\q_{1},\chi_1) - (\q_{2},\chi_2) \|_{\mc{X}}.\]
        Therefore, choosing $\epsilon \leq (2^7 C A)^{-1}$, the map $\mc{T}: \mc{X}\to \mc{X}$ is a contraction with respect to $\|\cdot\|_{\mc{X}}$, hence it has a unique fixed point in
        $\mc{X}$, and standard arguments show the continuity of the flow map. The scattering claim follows from the finiteness of
        both $ \| \q\|_X$ and $\| \chi\|_X$, because this implies that the pull-backs of $\q$ and $\chi$ along the free evolution, as maps from $\RR$ to $H^1(\RR^3)$, have finite quadratic variation,  see \cite{Bejenaru2014a,Bournaveas2015} for the details.
      \end{proof}

      \section{The Dirac-Klein-Gordon system}\label{sec:dkg}
  Let $P_{\lambda}$ be the standard Littlewood-Paley projections onto dyadic frequencies of size $\lambda$, and take $H_N$ to be the projection onto angular frequencies of size $N$, see \cite[Section 2]{Candy2017a} for precise definitions.  If $s \g 0$ and $\sigma=0$, we define
     \[
\|f\|_{\mb{D}^s_0(I)}=\|\lr{\nabla}^s f\|_{L^4(I\times \RR^3)}.
\]
 On the other hand, for $s \g 0$ and $\sigma>0$, we take
\[
\|f\|_{\mb{D}^s_\sigma(I)}=\Big(\sum_{N \g 1}N^{2\sigma} \|\lr{\nabla}^sH_N f\|^2_{L^4(I\times \RR^3)}\Big)^{\frac12}.
\]

The results in Section \ref{sec:red} imply that for smooth solutions
\eqref{eqn-DKG} and \eqref{eqn-dkg II} are equivalent, so that we focus on proving the following.

\begin{theorem}\label{thm-dkg reduced}
  Let $z \in \CC$, $|z|=1$. Suppose that either $s>0=\sigma$ and
  $2M > m > 0 $, or $\sigma>0=s$ and $M,m>0$. There exist $0<c<1$ and $\gamma>1$,
  such that for any $A\g 1$ and any
  $\epsilon\les c\exp(-A^\gamma)$, if
    \[  \| \varphi(0) \|_{H^s_\sigma(\RR^3)}\les \epsilon, \quad \| \chi(0)
      \|_{H^s_\sigma(\RR^3)}\les A,\quad \| \phi_+(0) \|_{H^{\frac{1}{2}+s}_\sigma(\RR^3)} \les A, \]
   then the system \eqref{eqn-dkg II} is globally well-posed and
    scatters to free solutions as $t \rightarrow \pm \infty$,
    i.e. there exist $\varphi_{\pm\infty}\in H^s_\sigma(\RR^3)$,
    $\chi_{\pm\infty}\in H^s_\sigma(\RR^3)$ and  $\phi_{\pm\infty}\in
    H^{\frac{1}{2}+s}_\sigma(\RR^3)$, such that
\begin{align*}
&\lim_{t \to \pm\infty}\|\varphi(t)-\mc{U}_{M}(t)
\varphi_{\pm\infty}\|_{H^s_\sigma}=0, \; \lim_{t \to \pm\infty}\|\chi(t)-\mc{U}_{M}(t)
\chi_{\pm\infty}\|_{H^s_\sigma}=0,\\
\text{ and }&\lim_{t \to \pm\infty}\|\phi_+(t)-\mc{U}^{+}_m(t)
\phi_{\pm\infty}\|_{H^{s+\frac12}_\sigma}=0.
\end{align*}
\end{theorem}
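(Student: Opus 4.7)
The plan is to exploit the reduction \eqref{eqn-dkg II} from Section \ref{sec:red}: working with $(\varphi,\chi,\phi_+)$ instead of $(\psi,\phi)$, the Majorana constraints force $\overline{\varphi}\varphi = \overline{\chi}\chi = 0$, so the Klein-Gordon source $\overline{\varphi}\chi+\overline{\chi}\varphi$ is \emph{linear} in the small unknown $\varphi$. Consequently $\phi_+$ stays close to a free Klein-Gordon wave, and although the potential $\Re(\phi_+)$ appearing in both Dirac equations is large in $H^{\frac{1}{2}+s}_\sigma$, on any sufficiently short time interval it is small in an $L^4_{t,x}$-type norm. The refined multilinear estimates from \cite{Candy2017a} bound the inhomogeneous term $\phi_+\psi$ in the Dirac equation by a small power of $\|\phi_+\|_{\mb{D}^0_\sigma}$ times the full resolution norm of $\psi$, so a contraction can be set up on any interval where this $\mb{D}^0_\sigma$-norm is small, regardless of the energy of $\phi_+$.

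Concretely, I would set up resolution spaces $X^s_\sigma, Y^s_\sigma$ at the critical scaling for the two Dirac equations and for the Klein-Gordon half-wave equation, as in \cite{Candy2016, Candy2017a}, obtaining the schematic a priori bounds
\begin{align*}
\|\chi\|_{X^s_\sigma(I)} &\lesssim \|\chi(t_0)\|_{H^s_\sigma}+ C\|\phi_+\|_{\mb{D}^0_\sigma(I)}^\delta\|\chi\|_{X^s_\sigma(I)},\\
\|\varphi\|_{X^s_\sigma(I)} &\lesssim \|\varphi(t_0)\|_{H^s_\sigma}+ C\|\phi_+\|_{\mb{D}^0_\sigma(I)}^\delta\|\varphi\|_{X^s_\sigma(I)},\\
\|\phi_+\|_{Y^s_\sigma(I)} &\lesssim \|\phi_+(t_0)\|_{H^{\frac{1}{2}+s}_\sigma} + C\|\varphi\|_{X^s_\sigma(I)}\|\chi\|_{X^s_\sigma(I)},
\end{align*}
for some $\delta>0$. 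By the Strichartz bound $\|\mc{U}^+_m\phi_+(0)\|_{\mb{D}^0_\sigma(\RR)}\lesssim A$, I partition $\RR=\bigcup_{k=1}^N I_k$ into $N\lesssim (A/\eta)^p$ subintervals on which the free part of $\phi_+$ has $\mb{D}^0_\sigma(I_k)$-norm at most $\eta$. Writing $A_k, \epsilon_k$ for the $H^s_\sigma\cap H^{\frac{1}{2}+s}_\sigma$-norms of $(\chi,\phi_+)$ and the $H^s_\sigma$-norm of $\varphi$ at time $t_k$, Picard iteration on each $I_k$ is a contraction provided $\eta$ and $\epsilon_k$ are small relative to $A_k$. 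Each step contributes multiplicative factors $(1+C\eta^\delta)$ to $\chi$ and $\varphi$ and an additive feedback $C\epsilon_k A_k$ into $A_{k+1}$; iterating yields $A_N, \epsilon_N/\epsilon_0 \lesssim \exp(CN\eta^\delta)$ provided the cumulative feedback $\sum_k \epsilon_k A_k$ is controlled. Choosing $\eta$ as a negative power of $A$ balances $N\eta^\delta$ to a positive power of $A$, producing the claimed threshold $\epsilon \lesssim c\exp(-A^\gamma)$.

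The main obstacle I expect is not the combinatorics of the subinterval induction but the verification of the refined $\mb{D}^0_\sigma$-type estimates of \cite{Candy2017a} when the potential $\phi_+$ is only an approximate free Klein-Gordon wave. On each subinterval one must split $\phi_+ = \mc{U}^+_m\phi_+(t_k) + \mc{I}^{+,m}_{t_k}\big(\lr{\nabla}_m^{-1}(\overline{\varphi}\chi+\overline{\chi}\varphi)\big)$ and check that the Duhamel contribution does not spoil the $\mb{D}^0_\sigma(I_k)$-smallness inherited from the free part, which is exactly where the factor of $A$ in the required smallness of $\epsilon$ appears and accumulates through the induction. Once the global bounds $\|\varphi\|_{X^s_\sigma(\RR)}, \|\chi\|_{X^s_\sigma(\RR)}, \|\phi_+\|_{Y^s_\sigma(\RR)}<\infty$ are established, the scattering assertions follow from the finite quadratic variation of the pull-backs along the respective free flows $\mc{U}_M(-t)\varphi(t), \mc{U}_M(-t)\chi(t), \mc{U}^+_m(-t)\phi_+(t)$, exactly as in the proof of Theorem \ref{thm-cubic dirac reduced}.
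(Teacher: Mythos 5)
Your proposal follows essentially the same route as the paper: the same reduction to \eqref{eqn-dkg II}, the same subinterval decomposition based on $L^4$-type smallness of the free Klein--Gordon evolution of $\phi_+$, and the same induction whose $\sim A^{4/\theta}$ steps of bounded multiplicative growth in $\|\chi\|$ force the double-smallness threshold $\epsilon \lesssim c\exp(-A^\gamma)$. One point to tighten: the Dirac Duhamel estimate \eqref{eq:nonl1} actually reads $C_0\|\phi_+\|_{\mb{D}^s_\sigma}^\theta \|\phi_+\|_{\mb{V}^{s+\frac12,\sigma}_{+,m}}^{1-\theta}\|\varphi\|_{\mb{F}^{s,\sigma}_M}$, so the $\mb{D}$-smallness threshold $\beta$ on each subinterval is not independent of the energy of $\phi_+$ (contrary to your phrase ``regardless of the energy of $\phi_+$'') but must scale like a negative power $B^{(\theta-1)/\theta}$ of it, and it is precisely this dependence that fixes the number $N$ of subintervals and hence the exponent $\gamma$.
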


Before we turn to its proof,  we summarise the results we require from \cite{Candy2017a}.

\begin{lemma}\label{lem:norms}
Let $s,\sigma \in\RR$, and $I$ be any interval of the form
$I=[t_1,t_2)$, $-\infty<t_1<t_2\leq \infty$. There exist Banach
function spaces $\mb{F}^{s, \sigma}_M(I)$ and $\mb{V}^{s, \sigma}_{+, m}(I)$ and
$C_0\g 1$ with the following properties:
\begin{enumerate}
\item\label{it:emb}$C_0^\infty(I\times \RR^3;\CC^4)\subset \mb{F}^{s, \sigma}_M(I)$, $C_0^\infty(I\times \RR^3;\CC)\subset \mb{V}^{s, \sigma}_{+, m}(I)$, and
\[
 \mb{F}^{s, \sigma}_M(I)\hookrightarrow C_b(I;H^s_\sigma(\RR^3;\CC^4)),\qquad
 \mb{V}^{s, \sigma}_{+, m}(I)\hookrightarrow C_b(I;H^s_\sigma(\RR^3;\CC)).
\]
\item\label{it:nested} For $\psi \in \mb{F}^{s, \sigma}_M(I)$,
  $\phi_+\in \mb{V}^{s, \sigma}_{+, m}(I)$, and for any $I'=[s_1,s_2)\subset I$, we have $\psi|_{I'} \in \mb{F}^{s, \sigma}_M(I')$,
  $\phi_+|_{I'}\in \mb{V}^{s, \sigma}_{+, m}(I')$, and
\[
\|\psi|_{I'}\|_{\mb{F}^{s, \sigma}_M(I')}\les C_0\|\psi\|_{\mb{F}^{s, \sigma}_M(I)}, \qquad \|\phi_+|_{I'}\|_{\mb{V}^{s+\frac12, \sigma}_{+, m}(I')}\les C_0\|\phi\|_{\mb{V}^{s+\frac12, \sigma}_{+, m} (I)}.
\]
\item\label{it:energy} For $\psi_0 \in H^s_\sigma(\RR^3;\CC^4)$ and $\phi_{0}\in H^s_\sigma(\RR^3;\CC)$ we have $\mc{U}_{M}(t)\psi_0 \in \mb{F}^{s, \sigma}_M(I)$, $\mc{U}^{+}_m (t)\phi_{0}\in \mb{V}^{s, \sigma}_{+, m}(I)$, and the bounds
\begin{equation}\label{eq:energy}
\|\mc{U}_{M} \psi_0\|_{\mb{F}^{s, \sigma}_M(I)}\les \|\psi_0\|_{H^s_\sigma}, \qquad \|\mc{U}^{+}_m \phi_{0}\|_{\mb{V}^{s, \sigma}_{+, m}(I)}\les \|\phi_{0}\|_{H^s_\sigma}.
\end{equation}
\item\label{it:limits} For $\psi \in \mb{F}^{s, \sigma}_M([t_1,t_2))$ and
  $\phi_+\in \mb{V}^{s, \sigma}_{+, m}([t_1,t_2))$ the limits \[\lim_{t\to t_2}\mc{U}_{M}
  (-t)\psi(t) \in H^s(\RR^3;\CC^4)\text{ and }\lim_{t\to
    t_2}\mc{U}^{+}_m(-t)\phi_+(t) \in H^s(\RR^3;\CC)\]exist.
\item\label{it:l4} For $\phi_+\in \mb{V}^{s+\frac12, \sigma}_{+, m}(I)$ we have the Strichartz-type estimate
\begin{equation}\label{eq:l4}
\|\phi_+\|_{\mb{D}^{s}_\sigma (I)} \les{}C_0 \|\phi_+\|_{\mb{V}^{s+\frac12, \sigma}_{+, m}(I)}.
\end{equation}
\item\label{it:nonl} Suppose that either $s>0=\sigma$ and
  $2M > m > 0 $, or $\sigma>0=s$ and $M,m>0$. There exists $\theta\in (0,1)$, such that for any $ t_0 \in I$ the Duhamel operators
\begin{align*}
\mb{V}^{s+\frac12, \sigma}_{+, m}(I)\times \mb{F}^{s, \sigma}_M(I)\ni (\phi_+ ,\varphi) &\mapsto \mc{I}^{M}_{t_0}(\Re(\phi_+) \varphi)\in \mb{F}^{s, \sigma}_M(I),\\
\mb{F}^{s,\sigma}_M(I)\times \mb{F}^{s, \sigma}_M(I)\ni (\chi,\varphi) &\mapsto \mc{I}^{+,m}_{t_0}(\lr{\nabla}_m^{-1} ( \overline{\chi} \varphi))\in \mb{V}^{s+\frac12, \sigma}_{+, m}(I)
\end{align*}
are well-defined and the following estimates hold:
\begin{align}\|\mc{I}^{M}_{t_0}(\Re(\phi_+) \varphi)\|_{\mb{F}^{s, \sigma}_M(I)}\les{}&C_0 \|\phi_+\|_{\mb{D}^s_\sigma(I)}^{\theta}\|\phi_+\|_{\mb{V}^{s+\frac12, \sigma}_{+, m}(I)}^{1-\theta}\|\varphi\|_{\mb{F}^{s, \sigma}_M(I)},\label{eq:nonl1}\\
\|\mc{I}^{+,m}_{t_0}(\lr{\nabla}_m^{-1}(\overline{\chi} \varphi))\|_{\mb{V}^{s+\frac12, \sigma}_{+, m}(I)}\les{}&C_0 \|\chi\|_{\mb{F}^{s, \sigma}_M(I)}\|\varphi\|_{\mb{F}^{s, \sigma}_M(I)}.\label{eq:nonl2}
\end{align}
\end{enumerate}
\end{lemma}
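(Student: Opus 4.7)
The plan is to combine the estimates of Lemma \ref{lem:norms} with a continuity argument in which the time line is partitioned into a large but controlled number of short subintervals on which $\|\phi_+\|_{\mb{D}^s_\sigma}$ is small, so that the large potential $\Re(\phi_+)$ in the Dirac equations for $\chi$ and $\varphi$ can be treated perturbatively via the gain factor $\|\phi_+\|_{\mb{D}^s_\sigma}^\theta$ in \eqref{eq:nonl1}. I treat only $t\geq 0$, the backwards case being identical. Local existence on some $[0,\tau)$ follows from a standard contraction in $\mb{F}^{s,\sigma}_M\times\mb{F}^{s,\sigma}_M\times\mb{V}^{s+\frac12,\sigma}_{+,m}$ using \eqref{eq:energy}--\eqref{eq:nonl2}: by \eqref{eq:l4} and absolute continuity of the $L^4_{t,x}$ integral, $\|\mc{U}^+_m \phi_+(0)\|_{\mb{D}^s_\sigma([0,\tau))}\to 0$ as $\tau\to 0$, making the Dirac nonlinearity contractive, while the smallness hypothesis on $\epsilon$ absorbs the bilinear Duhamel contribution to $\phi_+$. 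Property \ref{it:limits} gives a continuation criterion: the maximal solution on $[0,T^*)$ extends past $T^*$ provided the three norms stay finite, reducing the theorem to an a priori bound on an arbitrary $[0,T]\subset[0,T^*)$.

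Fix $C_*=2C_0$ and $\gamma>4/\theta$ (to be tuned) and adopt the bootstrap hypothesis
\[
 \|\phi_+\|_{\mb{V}^{s+\frac12,\sigma}_{+,m}([0,T])}\leq C_* A,
\]
so that \eqref{eq:l4} yields $\|\phi_+\|_{\mb{D}^s_\sigma([0,T])}\leq C_0C_*A$. Set $\delta^*:=(2C_0)^{-1/\theta}(C_*A)^{-(1-\theta)/\theta}$ and partition $[0,T]=\bigsqcup_{k=0}^{K-1}I_k$, $I_k=[t_k,t_{k+1})$, successively so that $\|\phi_+\|_{\mb{D}^s_\sigma(I_k)}=\delta^*$ on all but a possibly shorter last interval. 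Since the $L^4_{t,x}$-norm underlying $\mb{D}^s_\sigma$ is $4$-additive across disjoint time intervals (the angular-localised variant being compatible when $\sigma>0$), one has $K\lesssim (C_*A)^{4/\theta}$, and by construction $\alpha_k:=C_0\|\phi_+\|_{\mb{D}(I_k)}^\theta\|\phi_+\|_{\mb{V}(I_k)}^{1-\theta}\leq\tfrac12$ on every $I_k$.

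On each $I_k$, the Duhamel formula combined with \eqref{eq:energy} and \eqref{eq:nonl1} produces $\|\chi\|_{\mb{F}(I_k)}\leq (1-\alpha_k)^{-1}\|\chi(t_k)\|_{H^s_\sigma}$; evaluating at $t_{k+1}$ and using that $\mc{U}_M$ preserves $H^s_\sigma$ gives
\[
 \|\chi(t_{k+1})\|_{H^s_\sigma}\leq (1+2C_0\alpha_k)\|\chi(t_k)\|_{H^s_\sigma},
\]
and an identical estimate for $\varphi$. Taking the product over $k$ and bounding $\sum_k\alpha_k$ by Hölder in $k$ with exponents $\tfrac{4}{\theta},\tfrac{4}{4-\theta}$ against $\sum_k\|\phi_+\|^4_{\mb{D}(I_k)}\leq (C_0C_*A)^4$ and $K\lesssim (C_*A)^{4/\theta}$, I obtain $\sum_k\alpha_k\leq C_1 A^{4/\theta}$, whence
\[
 \sup_{t\in[0,T]}\|\chi(t)\|_{H^s_\sigma}\leq e^{C_2 A^{4/\theta}}A,\qquad \sup_{t\in[0,T]}\|\varphi(t)\|_{H^s_\sigma}\leq e^{C_2 A^{4/\theta}}\epsilon.
\]
The corresponding full-interval bounds $\|\chi\|_{\mb{F}([0,T])}\leq e^{C_3 A^{4/\theta}}A$ and $\|\varphi\|_{\mb{F}([0,T])}\leq e^{C_3 A^{4/\theta}}\epsilon$ then follow from $\ell^2$-summability of $\mb{F}$-norms across the $I_k$, a standard property of the $U^2/V^2$-type spaces underlying \cite{Candy2017a}, at the cost of a $K^{1/2}$ factor absorbed into the exponential.

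Plugging these into \eqref{eq:nonl2} with \eqref{eq:energy} yields
\[
 \|\phi_+\|_{\mb{V}^{s+\frac12,\sigma}_{+,m}([0,T])}\leq C_0 A + C_4 e^{2C_3 A^{4/\theta}} A\epsilon.
\]
Fixing $\gamma>4/\theta$ and $c>0$ so small that $C_4 c\exp(2C_3A^{4/\theta}-A^\gamma)\leq \tfrac12 C_0$ uniformly for $A\geq 1$, the hypothesis $\epsilon\leq c e^{-A^\gamma}$ drives the right-hand side strictly below $C_*A$, improving the bootstrap and closing the continuity argument on $[0,T^*)$. Hence $T^*=+\infty$, and the analogous scheme on $(-\infty,0]$ gives global existence on $\RR$; scattering in the three displayed topologies is then immediate from property \ref{it:limits} of Lemma \ref{lem:norms}. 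The main obstacle is Paragraph 3: the $\mb{D}$-gain $\|\phi_+\|_{\mb{D}}^\theta$ must be exploited so that the per-interval smallness of $\alpha_k$ (which pins $\delta^*$ and therefore $K$) balances precisely against the product $\prod_k(1+2C_0\alpha_k)$, in order to produce the exponent $4/\theta$ in the final growth bound; this is exactly the mechanism that forces the doubly-exponential smallness condition $\epsilon\leq c\exp(-A^\gamma)$.
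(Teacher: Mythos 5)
You have proved the wrong statement. The statement in question is Lemma~\ref{lem:norms}, which asserts the \emph{existence} of the iteration spaces $\mb{F}^{s,\sigma}_M(I)$ and $\mb{V}^{s+\frac12,\sigma}_{+,m}(I)$ together with their embedding, restriction, energy, limit, Strichartz-type, and bilinear/trilinear Duhamel properties \eqref{eq:energy}--\eqref{eq:nonl2}. Your proposal does not construct any function spaces or establish any of these six properties; on the contrary, it opens with ``The plan is to combine the estimates of Lemma~\ref{lem:norms} with a continuity argument'' and then runs a bootstrap based on a partition of $[0,T]$ into intervals of controlled $\mb{D}^s_\sigma$ mass, arriving at the smallness threshold $\epsilon\lesssim c\exp(-A^\gamma)$. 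That is an argument for the global well-posedness and scattering result, Theorem~\ref{thm-dkg reduced}, in which Lemma~\ref{lem:norms} is an input, not the conclusion.

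For the record, the paper does not prove Lemma~\ref{lem:norms} from scratch; its proof is a pointer to Section 2, Lemma 2.1, and Theorem 3.2 of \cite{Candy2017a}, where the adapted $U^2/V^2$-type spaces and the relevant null-form and angular-refined Strichartz estimates are built. If you wish to prove Lemma~\ref{lem:norms} itself, the work is in constructing these spaces (essentially atomic/variation spaces adapted to $\mc{U}_M$ and $\mc{U}^+_m$, with square-summation over dyadic and angular frequency blocks), checking the transfer principle giving \eqref{eq:energy} and \eqref{eq:l4}, and proving the bilinear estimates that yield \eqref{eq:nonl1} with the interpolation gain $\|\phi_+\|_{\mb{D}^s_\sigma}^\theta\|\phi_+\|_{\mb{V}}^{1-\theta}$ and \eqref{eq:nonl2}. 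None of that appears in your submission. Separately, even read as an attempted proof of Theorem~\ref{thm-dkg reduced}, your argument departs from the paper's: the paper partitions time using the \emph{free} evolution $\mc{U}^+_m\phi_0$ rather than the solution $\phi_+$ itself, iterates the local result Theorem~\ref{thm-DKG local} interval by interval with the data growing by a factor of $2$ per step, and avoids both the $\ell^2$-gluing across subintervals and the H\"older-in-$k$ bookkeeping you invoke; your appeal to ``$\ell^2$-summability of $\mb{F}$-norms across the $I_k$'' is not among the stated properties of Lemma~\ref{lem:norms} and would need independent justification.
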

\begin{proof}
For details see Section 2, Lemma 2.1, and Theorem 3.2 in \cite{Candy2017a}.
\end{proof}

The first step in the proof of Theorem \ref{thm-dkg reduced}, is to prove the following local result.
\begin{theorem}\label{thm-DKG local}
Suppose that either $s>0=\sigma$ and
  $2M > m > 0 $, or $\sigma>0=s$ and $M,m>0$.
There exist $\theta,c\in(0,1)$ and $C>1$, such that for any $A, B\g 1$ and any $0<\alpha\les c A^{-1}$ and $0<\beta\les c B^{\frac{\theta-1}{\theta}}$,
and for any interval
  $I=[t_1,t_2)\subset \RR$ and $t_0\in I$, if we have
    \[ \|  \varphi_0 \|_{H_{\sigma}^s(\RR^3)} \les \alpha, \qquad \| \chi_0 \|_{H_{\sigma}^s(\RR^3)} \les A,\]
    and
    \[ \| \mc{U}^{+}_m(\cdot-t_0)\phi_0\|_{\mb{D}_{\sigma}^{s}(I)} \les \beta ,\qquad\|\phi_0 \|_{H_{\sigma}^{\frac12+s}(\RR^3)} \les B,\]
    then there exists a unique solution $(\varphi,\chi, \phi_+)\in \mb{F}^{s, \sigma}_M(I)\times \mb{F}^{s, \sigma}_M(I)\times \mb{V}^{s+\frac12, \sigma}_{+, m}(I)$ of
    \eqref{eqn-dkg II} on $I \times \RR^3$ with initial condition $(\varphi, \chi, \phi_+)(t_0) = (\varphi_0, \chi_0, \phi_0)$. Moreover the solution depends continuously on the initial data and satisfies the bounds
        \begin{align*}& \sup_{t \in I}\| \varphi(t) \|_{H_{\sigma}^s(\RR^3)} \les{} 2 \| \varphi_0 \|_{H_{\sigma}^s(\RR^3)}, \qquad \sup_{t \in I}\| \chi(t) \|_{H_{\sigma}^s(\RR^3)} \les 2 \| \chi_0 \|_{H_{\sigma}^s(\RR^3)},  \\
&\sup_{t \in I} \| \phi_+(t) -\mc{U}^{+}_m(t-t_0)\phi_0(t_0)\|_{H_{\sigma}^{\frac12+s}(\RR^3)} \les{}  C\| \varphi_0 \|_{H_{\sigma}^s(\RR^3)}\| \chi_0 \|_{H_{\sigma}^s(\RR^3)}.
        \end{align*}
      \end{theorem}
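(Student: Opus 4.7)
The plan is to run a contraction mapping argument in the adapted function spaces of Lemma~\ref{lem:norms}. Set $\phi^L_+(t)=\mc{U}^{+}_m(t-t_0)\phi_0$, fix a large constant $K$ (to be chosen depending on $C_0$), and consider the closed set
\[
\mc{X}=\{(\varphi,\chi,\phi_+):\ \|\varphi\|_{\mb{F}^{s,\sigma}_M(I)}\les 2\alpha,\ \|\chi\|_{\mb{F}^{s,\sigma}_M(I)}\les 2A,\ \|\phi_+-\phi^L_+\|_{\mb{V}^{s+\frac12,\sigma}_{+,m}(I)}\les K\alpha A\},
\]
endowed with the complete weighted metric $d$ obtained by dividing the three components by $\alpha$, $A$, and $K\alpha A$. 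The solution map $\mc{T}=(T_1,T_2,T_3)$ is the Duhamel map: $T_1=\mc{U}_M(\cdot-t_0)\varphi_0+\mc{I}^M_{t_0}(\Re(\phi_+)\varphi)$, $T_2$ analogous with $\chi$ in place of $\varphi$, and $T_3=\phi^L_++\mc{I}^{+,m}_{t_0}(\lr{\nabla}_m^{-1}(\overline{\varphi}\chi+\overline{\chi}\varphi))$.

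The crux of the proof is to absorb the factor $\|\phi_+\|_{\mb{V}^{s+\frac12,\sigma}_{+,m}(I)}^{1-\theta}\sim B^{1-\theta}$ appearing on the right-hand side of \eqref{eq:nonl1}, which at face value is too large. The key idea is to split $\phi_+=\phi^L_++\phi^N_+$ with $\phi^N_+=\phi_+-\phi^L_+$, and to exploit the bilinearity of \eqref{eq:nonl1} in its first two arguments. Applied to $\phi^L_+$, the hypothesis $\|\phi^L_+\|_{\mb{D}^s_\sigma(I)}\les \beta$ combined with \eqref{eq:energy} yields a contribution bounded by $C_0\beta^\theta B^{1-\theta}\les C_0c^\theta$, where the assumption $\beta\les cB^{(\theta-1)/\theta}$ is used in an essential way. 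Applied to $\phi^N_+$, the Strichartz estimate \eqref{eq:l4} forces $\|\phi^N_+\|_{\mb{D}^s_\sigma(I)}\les C_0\|\phi^N_+\|_{\mb{V}^{s+\frac12,\sigma}_{+,m}(I)}\les C_0K\alpha A$, so the interpolation in \eqref{eq:nonl1} collapses to a bound essentially linear in $\|\phi^N_+\|_{\mb{V}^{s+\frac12,\sigma}_{+,m}(I)}$ and hence $\les C_0^{1+\theta}Kc$. Choosing $c$ sufficiently small (depending on $K,C_0,\theta$), both contributions are dominated by $\tfrac14\|\varphi\|_{\mb{F}^{s,\sigma}_M(I)}$, which together with \eqref{eq:energy} gives $\|T_1\|_{\mb{F}^{s,\sigma}_M(I)}\les 2\alpha$ and, by the same reasoning with $\chi$, $\|T_2\|_{\mb{F}^{s,\sigma}_M(I)}\les 2A$. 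The bound for $T_3-\phi^L_+$ is immediate from \eqref{eq:nonl2} and the radii of $\mc{X}$, provided $K\g 4C_0$, yielding $\mc{T}(\mc{X})\subseteq\mc{X}$.

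Contraction under $d$ follows from the same splitting applied to differences, with the crucial observation that for two candidates in $\mc{X}$ the difference $\phi_{+,1}-\phi_{+,2}$ has vanishing initial data and therefore lies entirely in the ``nonlinear'' part; in particular its $\mb{D}^s_\sigma$ norm is controlled by its $\mb{V}^{s+\frac12,\sigma}_{+,m}$ norm via \eqref{eq:l4}, and one recovers a contraction factor strictly less than $1$ upon choosing $c$ small. Banach's fixed point theorem then produces the unique solution $(\varphi,\chi,\phi_+)\in \mc{X}$; uniqueness in the ambient product space, as well as continuous dependence on the data, is standard from the contraction estimate and the local-to-global nature of the norms via Lemma~\ref{lem:norms}\eqref{it:nested}. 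Reading off the embeddings in Lemma~\ref{lem:norms}\eqref{it:emb}, the three pointwise bounds stated in the theorem follow from $\|\varphi\|_{\mb{F}^{s,\sigma}_M(I)}\les 2\|\varphi_0\|_{H^s_\sigma}$, the analogous estimate for $\chi$, and $\|\phi_+-\phi^L_+\|_{\mb{V}^{s+\frac12,\sigma}_{+,m}(I)}\les C_0\|\varphi\|_{\mb{F}^{s,\sigma}_M(I)}\|\chi\|_{\mb{F}^{s,\sigma}_M(I)}\les 4C_0\|\varphi_0\|_{H^s_\sigma}\|\chi_0\|_{H^s_\sigma}$. The principal obstacle is entirely concentrated in \eqref{eq:nonl1}: the loss $B^{1-\theta}$ is unavoidable, and it is precisely the quantitative Strichartz hypothesis $\beta\les cB^{(\theta-1)/\theta}$, combined with the linear/nonlinear splitting of $\phi_+$, that makes it harmless.
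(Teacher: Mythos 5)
Your argument follows essentially the same route as the paper: you run a weighted contraction on a set $\mc{X}$ anchored at the free evolutions, and the decisive step is exactly the decomposition $\phi_+ = \phi^L_+ + \phi^N_+$ inside \eqref{eq:nonl1}, where the free part is controlled by $\beta^\theta B^{1-\theta} \les c^\theta$ via the quantitative hypothesis $\beta \les cB^{(\theta-1)/\theta}$, while for $\phi^N_+$ the Strichartz bound \eqref{eq:l4} collapses the interpolation to a quantity of order $K\alpha A \les Kc$. Your metric weight $K\alpha A$ on the $\phi_+$-component differs from the paper's constant weight $\eta^{-1}$, but this is a harmless reparametrisation. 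The a priori Sobolev bounds then follow from the embedding in Lemma~\ref{lem:norms}\eqref{it:emb}, as you say.

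There is, however, one real gap: your claim that uniqueness in the full space $\mb{F}^{s,\sigma}_M(I)\times\mb{F}^{s,\sigma}_M(I)\times\mb{V}^{s+\frac12,\sigma}_{+,m}(I)$ ``is standard from the contraction estimate and the local-to-global nature of the norms.'' The contraction yields uniqueness only within $\mc{X}$, where $\|\phi_+-\phi^L_+\|_{\mb{V}^{s+\frac12,\sigma}_{+,m}(I)}\les K\alpha A$ is small. A competing solution $(\varphi',\chi',\phi'_+)$ in the ambient space may a priori have $\|\phi'_+\|_{\mb{V}^{s+\frac12,\sigma}_{+,m}(I)} \leq R$ arbitrarily large, and then the bound from \eqref{eq:nonl1} for $\phi'_+$ involves the uncontrolled factor $R^{1-\theta}$. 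The paper deals with this by a restart argument: take $t'$ the supremum of times where the two solutions agree, pass to a short interval $I'=[t',t'+\delta)$ on which $\|\phi'_+\|_{\mb{D}^s_\sigma(I')}\les\vep$ (absolute continuity of the $L^4_{t,x}$ norm), choose $\vep$ depending on $R$ so that $\vep^\theta R^{1-\theta}$ is small, and deduce that on $I'$ the competitor also lies in the local version of $\mc{X}$, contradicting maximality of $t'$. This step is not a formality, since $R$ is not controlled by the data, and it is where Lemma~\ref{lem:norms}\eqref{it:nested} is actually used in an essential way; as written, your proof only establishes uniqueness in $\mc{X}$, not the uniqueness asserted in the theorem.
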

\begin{proof}
For convenience, let $\varphi_L(t)=\mathcal{U}_M(t-t_0)\varphi_0$,
$\chi_L(t)=\mathcal{U}_M(t-t_0)\chi_0$, and
$\phi_{+,L}(t)=\mc{U}^{+}_m(t-t_0)\phi_{0}$.
 Let $C_0\g 1$ and $\theta\in (0,1)$ be as in Lemma \ref{lem:norms}.
Define $S$ as the set of all $(\varphi,\chi,\phi_+)\in \mb{F}^{s, \sigma}_M(I)\times \mb{F}^{s, \sigma}_M(I)\times
  \mb{V}^{s+\frac12, \sigma}_{+, m}(I)$ satisfying
\begin{align*}
\|\varphi -\varphi_L\|_{\mb{F}^{s, \sigma}_M(I)}\les{}&
  \|\varphi_0\|_{H_{\sigma}^s},\qquad \|\chi-\chi_L\|_{\mb{F}^{s, \sigma}_M(I)}\les
  \|\chi_0\|_{H_{\sigma}^s}, \\
  \|\phi_+-\phi_{+,L}\|_{\mb{V}^{s+\frac12, \sigma}_{+, m}(I)}\les{}&
  2^3C_0\|\varphi_0\|_{H_{\sigma}^s}\|\chi_0\|_{H_{\sigma}^s}.
\end{align*}
It is a complete metric space with respect to the norm
\[
\|(\varphi,\chi,\phi_+)\|_S:=\alpha^{-1}\|\varphi\|_{\mb{F}^{s, \sigma}_M(I)}+A^{-1}\|\chi\|_{\mb{F}^{s, \sigma}_M(I)}+\eta^{-1}\|\phi_+\|_{\mb{V}^{s+\frac12, \sigma}_{+, m}(I)},
\]
where $\eta>0$ will be chosen later.
Let \[\mathcal{T}=(T_1,T_2,T_3): \mb{F}^{s, \sigma}_M(I)\times \mb{F}^{s, \sigma}_M(I)\times
  \mb{V}^{s+\frac12, \sigma}_{+, m}(I)\to  \mb{F}^{s, \sigma}_M(I)\times \mb{F}^{s, \sigma}_M(I)\times
  \mb{V}^{s+\frac12, \sigma}_{+, m}(I)\] be defined as
\[
\mathcal{T}(\varphi,\chi,\phi_+)=\begin{pmatrix}
\mathcal{U}_M(\cdot-t_0)\varphi_0+\mathcal{I}^M_{t_0}(\Re(\phi_+)\varphi)\\
\mathcal{U}_M(\cdot-t_0)\chi_0+\mathcal{I}^M_{t_0}(\Re(\phi_+)\chi)\\
\mc{U}^{+}_m(\cdot-t_0)\phi_{+,0}+\mc{I}^{+,m}_{t_0}(\lr{\nabla}_m^{-1}(\overline{\varphi}\chi+\overline{\chi}\varphi))
\end{pmatrix},
\]
see Lemma \ref{lem:norms}. Fixed points of $\mc{T}$ are solutions of
\eqref{eqn-dkg II} with the given data at time $t_0$. For
$(\varphi,\chi,\phi_+)\in S$ we infer that
\begin{align*}
\|\varphi\|_{\mb{F}^{s, \sigma}_M(I)}\les{}&
\|\varphi-\varphi_L\|_{\mb{F}^{s, \sigma}_M(I)}+\|\varphi_L\|_{\mb{F}^{s, \sigma}_M(I)}\les
2\|\varphi_0\|_{H_{\sigma}^s}\les 2\alpha,\\
\|\chi\|_{\mb{F}^{s, \sigma}_M(I)}\les{}&
\|\chi-\chi_L\|_{\mb{F}^{s, \sigma}_M(I)}+\|\chi_L\|_{\mb{F}^{s, \sigma}_M(I)}\les
2\|\chi_0\|_{H_{\sigma}^s}\les 2A,
\end{align*}
and similarly,
\begin{align*}
\|\phi_{+,L}\|_{\mb{D}_{\sigma}^s(I)}^{\theta}\|\phi_{+,L}\|_{\mb{V}^{s+\frac12, \sigma}_{+, m}(I)}^{1-\theta}\les{}&
\beta^\theta B^{1-\theta}, \\
 \|\phi_+-\phi_{+,L}\|_{\mb{D}_{\sigma}^s(I)}^{\theta}\|\phi_+-\phi_{+,L}\|_{\mb{V}^{s+\frac12, \sigma}_{+, m}(I)}^{1-\theta}
\les{}&  2^3 C_0^{1+\theta}\|\varphi_0\|_{H_{\sigma}^s}\|\chi_0\|_{H_{\sigma}^s}\les
        2^3C_0^2\alpha A.
\end{align*}
If $\alpha\les (2^5C_0^3A)^{-1}$ and $\beta \les (4C_0 B^{1-\theta})^{-\frac{1}{\theta}}$, Lemma \ref{lem:norms} implies
\begin{equation}\label{eq:apriori1}
\|T_1(\varphi,\chi,\phi_+)-\varphi_L\|_{\mb{F}^{s, \sigma}_M(I)}\les{}\big(2C_0 \beta^\theta  B^{1-\theta} +
                                                            2^4C_0^{3}\alpha
                                                            A\big)\|\varphi_0\|_{H_{\sigma}^s}
\les{}\|\varphi_0\|_{H_{\sigma}^s},
\end{equation}
and
\begin{equation}\label{eq:apriori2}
\|T_2(\varphi,\chi,\phi_+)-\chi_L\|_{\mb{F}^{s, \sigma}_M(I)}\les{}\big(2C_0 \beta^\theta  B^{1-\theta} +
                                                            2^4C_0^{3}\alpha
                                                            A\big)\|\chi_0\|_{H_{\sigma}^s}
\les{}\|\chi_0\|_{H_{\sigma}^s},
\end{equation}
as well as
\begin{equation}\label{eq:apriori3}
\|T_3(\varphi,\chi,\phi_+)-\phi_{+,L}\|_{\mb{V}^{s+\frac12, \sigma}_{+, m}(I)}\les{}2^3C_0 \|\varphi_0\|_{H_{\sigma}^s}\|\chi_0\|_{H_{\sigma}^s}.
\end{equation}
We will now show that $\mc{T}:S\to S$ is a contraction, provided that
$\alpha,\beta$ are chosen small enough. Let $(\varphi,\chi,\phi_+)\in
S$ and $(\tilde{\varphi},\tilde{\chi},\tilde{\phi}_+)\in S$. Then, by
Lemma \ref{lem:norms},
\begin{align*}
\|T_1(\varphi,\chi,\phi_+)-T_1
  (\tilde{\varphi},\tilde{\chi},\tilde{\phi}_+)\|_{\mb{F}^{s, \sigma}_M(I)}\les{}&\big(C_0 \beta^\theta  B^{1-\theta} +
                                                            2^3C_0^{3}\alpha
                                                            A\big)\|\varphi-\tilde\varphi\|_{\mb{F}^{s, \sigma}_M(I)}\\
&+2C_0^2\alpha\|\phi_+-\tilde{\phi}_+\|_{\mb{V}^{s+\frac12, \sigma}_{+, m}(I)},
\end{align*}
and
\begin{align*}
\|T_2(\varphi,\chi,\phi_+)-T_2
  (\tilde{\varphi},\tilde{\chi},\tilde{\phi}_+)\|_{\mb{F}^{s, \sigma}_M(I)}\les{}&\big(C_0 \beta^\theta  B^{1-\theta} +
                                                            2^3C_0^{3}\alpha
                                                            A\big)\|\chi-\tilde\chi\|_{\mb{F}^{s, \sigma}_M(I)}\\
&+2C_0^2A\|\phi_+-\tilde{\phi}_+\|_{\mb{V}^{s+\frac12, \sigma}_{+, m}(I)},
\end{align*}
as well as
\begin{align*}
\|T_3(\varphi,\chi,\phi_+)-T_3
  (\tilde{\varphi},\tilde{\chi},\tilde{\phi}_+)\|_{\mb{V}^{s+\frac12, \sigma}_{+, m}(I)}\les{}
&2^2C_0 \alpha  \|\chi-\tilde\chi\|_{\mb{F}^{s, \sigma}_M(I)}
+2^2C_0   A\|\varphi-\tilde\varphi\|_{\mb{F}^{s, \sigma}_M(I)}.
\end{align*}
We obtain
\begin{align*}
\|\mc{T}(\varphi,\chi,\phi_+)-&\mc{T}
  (\tilde{\varphi},\tilde{\chi},\tilde{\phi}_+)\|_S\les{}4C_0^2\eta \eta^{-1}\|\phi_+-\tilde{\phi}_+\|_{\mb{V}^{s+\frac12, \sigma}_{+, m}(I)}\\
+& \big(C_0 \beta^\theta  B^{1-\theta} +
                                                            2^3C_0^{3}\alpha
                                                            A+2^2C_0 A
  \alpha\eta^{-1}\big)\alpha^{-1}\|\varphi-\tilde\varphi\|_{\mb{F}^{s, \sigma}_M(I)}\\
+&\big(C_0 \beta^\theta  B^{1-\theta} +
                                                            2^3C_0^{3}\alpha
                                                            A+2^2C_0 A
  \alpha\eta^{-1}\big)A^{-1}\|\chi-\tilde\chi\|_{\mb{F}^{s, \sigma}_M(I)}.
\end{align*}
By fixing $\eta=(2^4 C_0^2)^{-1}$, and choosing $\alpha\les
(2^{12}C_0^3A)^{-1}$ and $\beta\les
(2^4C_0B^{1-\theta})^{-\frac{1}{\theta}}$, we have verified that
$\mathcal{T}:S\to S$ is a
contraction, hence it has a fixed point
$(\varphi,\chi,\phi_+)\in S$ which is unique in $S$. For later purposes we note that we have
chosen the thresholds for $\alpha$ and $\beta$ small enough such that
the same conclusion holds
if $\alpha$, $A$, and $B$ are doubled.
Similar estimates show that the fixed point depends
continuously on the initial data. Due to \eqref{eq:energy}, the
claimed estimates on the Sobolev norms for
$(\varphi(t),\chi(t),\phi_+(t))$ for $t\in I$ follow from
\eqref{eq:apriori1}, \eqref{eq:apriori2} and \eqref{eq:apriori3}.

Finally, we prove uniqueness. Assume that $(\varphi',\chi',\phi'_+)\in \mb{F}^{s, \sigma}_M(I)\times \mb{F}^{s, \sigma}_M(I)\times
  \mb{V}^{s+\frac12, \sigma}_{+, m}(I)$ is another solution with the same data at
  $t_0$ such that \[t':=\sup\{t\in I\mid
  (\varphi',\chi',\phi'_+)(t)=
  (\varphi,\chi,\phi_+)
  (t)\}<t_2.\]
Then,
\[
\|\varphi'(t')\|_{H_{\sigma}^s}\les 2\alpha
,\qquad \|\chi'(t')\|_{H_{\sigma}^s}\les 2A,\qquad
\|\phi'_+(t')\|_{H_{\sigma}^{\frac12+s}}\les 2B.
\]
Let $\|\phi'_+\|_{\mb{V}^{s+\frac12, \sigma}_{+, m}(I)}\leq R$.
By Lemma \ref{lem:norms} we have
\[\|\phi'_+\|_{\mb{D}_{\sigma}^{s}(I')}\les C_0\|\phi'_+\|_{\mb{V}^{s+\frac12, \sigma}_{+, m}(I')}\les
C_0^2 R\] for any $I'\subseteq I$. For $\varepsilon\in(0, \beta)$ (which
will be specified below),
let $\delta>0$ be small enough
such that $I':=[t',t'+\delta)\subset I$ and
$\|\phi'_+\|_{\mb{D}_{\sigma}^{s}(I')}\les \varepsilon$.
Let $\varphi'_L(t):=\mathcal{U}_M(t-t')\varphi(t')$,
$\chi'_L(t):=\mathcal{U}_M(t-t')\chi(t')$, and
$\phi'_{+,L}(t):=\mc{U}^{+}_m(t-t')\phi_{+}(t')$. Then,
\[
\|\varphi'-\varphi'_L\|_{\mb{F}^{s, \sigma}_M(I')}\les{}C_0 \varepsilon^\theta
R^{1-\theta}\big(\|\varphi'-\varphi'_L\|_{\mb{F}^{s, \sigma}_M(I')}+\|\varphi'_L\|_{\mb{F}^{s, \sigma}_M(I')}\big),\]
so that if we fix some $\varepsilon\les
(2C_0R^{1-\theta})^{-\frac{1}{\theta}}$, we obtain
\[\|\varphi'-\varphi'_L\|_{\mb{F}^{s, \sigma}_M(I')}\les{}\|\varphi(t')\|_{H_{\sigma}^s}.\]
A similar estimate shows
\[\|\chi'-\chi'_L\|_{\mb{F}^{s, \sigma}_M(I')}\les{}\|\chi(t')\|_{H_{\sigma}^s}.\]
Then,
\[
\|\phi'_{+}-\phi'_{+,L}\|_{\mb{V}^{s+\frac12, \sigma}_{+, m}(I')}\les 2^3C_0 \|\varphi(t')\|_{H_{\sigma}^s}\|\chi(t')\|_{H_{\sigma}^s}.
\]
These estimates show that $(\varphi',\chi',\phi'_+)$ is contained in
the set $S$ defined as above, but with the modified initial condition at $t'$
instead of $t_0$
and the interval $I'$ instead of $I$. Also, the estimates with $I$
replaced by $I'$ in the first
part of the proof imply that $(\varphi,\chi,\phi_+)|_{I'}$ is contained in
this version of the set $S$. The uniqueness within $S$ proven
above implies that $(\varphi',\chi',\phi'_+)=(\varphi,\chi,\phi_+)$ in
$I'$, which contradicts the definition of $t'$.
\end{proof}
We can now prove Theorem
  \ref{thm-dkg reduced} as follows.
 By our hypothesis, the initial data at
 time $0$ satisfy
        \[\|\varphi_0\|_{H_{\sigma}^s}\les \epsilon,\; \|\chi_0\|_{H_{\sigma}^s}\les A,
          \;  \| \phi_{0}\|_{H_{\sigma}^{\frac{1}{2}+s}} \les A,\]
and $\epsilon>0$ is chosen small enough, depending on $A$ only (the precise
threshold will be specified below).
Let $\beta^\ast(B)=cB^{\frac{\theta-1}{\theta}}$ and $\alpha^\ast(A)=cA^{-1}$
be the thresholds as in Theorem \ref{thm-DKG local}.
        Then, by the Strichartz estimate from Lemma \ref{lem:norms} \eqref{it:l4}, we have
    \[ \| \mc{U}^{+}_m(t)\phi_{0} \|_{\mb{D}_{\sigma}^s(\RR_+)} \les C_0 A\]
with $C_0\g 1$. By monotone convergence, the function $T \mapsto \| \mc{U}^{+}_m(t)\phi_{0}
\|_{\mb{D}_{\sigma}^s([T_0,T))}$ is continuous in $T$ and converges to zero as
$T\searrow T_0$. Therefore, for $\beta:=\beta^\ast(2A)$, we can choose
 $0=s_0<s_1<\ldots<s_N$ such that
\[
 \| \mc{U}^{+}_m(t)\phi_{0}
\|_{\mb{D}_{\sigma}^s([s_{n-1},s_n))}=\beta/4 \text{ and } \| \mc{U}^{+}_m(t)\phi_{0}
\|_{\mb{D}_{\sigma}^s([s_n,\infty))}\les \beta/4.
\]
With $s_{N+1}=\infty$, define the collection
    of intervals $I_n=[s_{n-1}, s_{n+1})$ for $n=1, ..., N$. Then,
        \[ \beta/4\les \| \mc{U}^+_m(t)\phi_0 \|_{\mb{D}_{\sigma}^s(I_n)} \les
          \beta/2 \]
and, by Minkowski's inequality,
\[
\sum_{n=1}^N \| \mc{U}^+_m(t)\phi_0 \|_{\mb{D}_{\sigma}^s(I_n)}
^4\les 2 (C_0A)^4,
\]
therefore $N\les N_0:=2^6(C_0A)^4\beta^{-4}$.

Now, fix $\epsilon\les
cC^{-1}C_0^{-1} 2^{-2N_0}A^{-1}\beta$.
    We claim that for
    every $ 1\les n \les N$, on $I_n$ we have a unique solution
    $(\varphi^{(n)},\chi^{(n)},\phi_+^{(n)})\in  \mb{F}^{s, \sigma}_M(I_n)\times \mb{F}^{s, \sigma}_M(I_n)\times
  \mb{V}^{s+\frac12, \sigma}_{+, m}(I_n)$ with initial condition
  \begin{align*}
    (\varphi^{(n)},\chi^{(n)},\phi_+^{(n)})(s_{n-1})=&(\varphi^{(n-1)},\chi^{(n-1)},\phi_+^{(n-1)})(s_{n-1})\quad
  (\text{if }2\les n\les N)\\
  (\varphi^{(1)},\chi^{(1)},\phi_+^{(1)})(s_0)=&(\varphi_0,\chi_0,\phi_{0})
  \quad
  (\text{if }n=1)
  \end{align*}
  which satisfies the bounds
    \begin{equation}  \label{eqn-induc assump}\begin{split}
\big\| \mc{U}^{+}_m(\cdot -s_{n-1})\phi_+^{(n-1)}(s_{n-1})
\big\|_{\mb{D}_{\sigma}^s(I_{n})}\les \beta,\\
\|        \varphi^{(n)}(s_n)\|_{H_{\sigma}^s} \les 2^n \epsilon, \qquad \|\chi^{(n)}(s_n) \|_{H_{\sigma}^s} \les 2^n A, \qquad \\
        \| \phi_+^{(n)}(s_n) - \mc{U}^{+}_m(s_n)\phi_{0}\|_{H_{\sigma}^{\frac{1}{2}+s} }
        \les C 2^{2n} \epsilon A,
  \end{split}
    \end{equation}
where $C$ is the constant from Theorem \ref{thm-DKG local}.
    Indeed, for $n=1$ the estimate in the first line follows by
    definition of $I_1$, and the estimates in  the second and third line follow
    from an application of Theorem \ref{thm-DKG local} (with
    $t_0=0$), where we use that $\epsilon \les \alpha^\ast(A)$ and $\beta
    \les \beta^\ast(A)$.
    As an induction hypothesis, let us suppose that  holds
    \eqref{eqn-induc assump} for some $1\les n \les N-1$.

By Lemma \ref{lem:norms}, the induction hypothesis, and the choice of $\epsilon$ we have
    \begin{align*} \big\| \mc{U}^{+}_m(\cdot -s_n)\phi_+^{(n)}(s_n) \big\|_{\mb{D}_{\sigma}^s(I_{n+1})}
      \les{}& \| \mc{U}^{+}_m \phi_{0} \|_{\mb{D}_{\sigma}^s(I_{n+1})}\\
& {}+   \big\|
        \mc{U}^{+}_m\big( \phi_{0} - \mc{U}^{+}_m(-s_n)
        \phi_+^{(n)}(s_n)\big)\big\|_{\mb{D}_{\sigma}^s(I_{n+1})}
\\
\les{}&  \beta/2 +  C_0 \big\|
  \phi_{0} - \mc{U}^{+}_m(-s_n)
  \phi_+^{(n)} (s_n)\big\|_{H_{\sigma}^{\frac{1}{2} +s}}
\\
      \les{}& \beta/2 + C C_02^{2n} \epsilon A \les \beta.
    \end{align*}
From the estimate in the third line of the induction hypothesis and
the smallness condition on $\epsilon$ we
obtain
\[     \| \phi_+^{(n)} (s_n)\|_{H_{\sigma}^{\frac{1}{2}+s} }
        \les  \|\mc{U}^{+}_m(s_n)\phi_{0}\|_{H_{\sigma}^{\frac{1}{2}+s} }+ C 2^{2n}
        \epsilon A\les A + C 2^{2n}
        \epsilon A\les 2A.
\]
Notice that due to our choices we have
$\beta\les \beta^\ast (2A)$ and $2^n\epsilon\les \alpha^\ast (2^n A)$.
Then, as $s_{n+1}\in I_{n+1}$,
    we obtain from Theorem
    \ref{thm-DKG local} (with $t_0=s_n$) that
 \begin{align*}
   \|      \varphi^{(n+1)} (s_{n+1})\|_{H_{\sigma}^s} \les{}& 2 \|\varphi^{(n)} (s_{n})\|_{H_{\sigma}^s}\les
         2^{n+1} \epsilon,\\
\|\chi^{(n+1)} (s_{n+1}) \|_{H_{\sigma}^s} \les{}&
         2\|\chi^{(n)} (s_{n}) \|_{H_{\sigma}^s} \les 2^{n+1} A,
 \end{align*}
 and, using the induction hypothesis again,
\begin{align*}
       & \| \phi_+^{(n+1)}(s_{n+1}) - \mc{U}^{+}_m(s_{n+1})\phi_{0}\|_{H_{\sigma}^{\frac{1}{2}+s} }\\
        \les{}&\| \phi_+^{(n+1)}(s_{n+1}) -
                \mc{U}^{+}_m(s_{n+1}-s_{n})\phi_+^{(n)}(s_{n})\|_{H_{\sigma}^{\frac{1}{2}+s}}
  \\
&+\| \mc{U}^{+}_m(s_{n+1}-s_{n})\phi_+^{(n)}(s_{n}) - \mc{U}^{+}_m(s_{n+1})\phi_{0}\|_{H_{\sigma}^{\frac{1}{2}+s} }\\
 \les{}&C 2^{2n} \epsilon A+C 2^{2n} \epsilon A\les
         C2^{2(n+1)}\epsilon A.
 \end{align*}
The proof of the claim is complete.

By uniqueness, we have constructed a global
solution \[(\varphi,\chi,\phi_+)\in  C_b(\RR_+,H_{\sigma}^s)\times
  C_b(\RR_+,H_{\sigma}^s)\times C_b(\RR_+,H_{\sigma}^{\frac12+s}),\]
and due to $(\varphi,\chi,\phi_+)|_{[s_N,\infty)}\in \mb{F}^{s, \sigma}_M([s_N,\infty))\times \mb{F}^{s, \sigma}_M([s_N,\infty))\times
  \mb{V}^{s+\frac12, \sigma}_{+, m}([s_N,\infty)) $
 it scatters as $t\to \infty$, see Lemma \ref{lem:norms} Part
 \eqref{it:limits}. The claim for $t\to -\infty$ follows by time
 reversibility. Continuous dependence also follows from the local result, we omit the details. This completes the proof of Theorem    \ref{thm-dkg reduced}.

          \bibliographystyle{amsplain} \bibliography{remark}

\providecommand{\bysame}{\leavevmode\hbox to3em{\hrulefill}\thinspace}
\providecommand{\MR}{\relax\ifhmode\unskip\space\fi MR }
\providecommand{\MRhref}[2]{%
  \href{http://www.ams.org/mathscinet-getitem?mr=#1}{#2}
}
\providecommand{\href}[2]{#2}
\begin{thebibliography}{10}

\bibitem{Bachelot1988}
Alain Bachelot, \emph{Global existence of large amplitude solutions for
  {D}irac-{K}lein-{G}ordon systems in {M}inkowski space}, Nonlinear hyperbolic
  problems ({B}ordeaux, 1988), Lecture Notes in Math., vol. 1402, Springer,
  Berlin, 1989, pp.~99--113. \MR{1033278}

\bibitem{Bachelot1989}
\bysame, \emph{Global existence of large amplitude solutions for nonlinear
  massless {D}irac equation}, Portugal. Math. \textbf{46} (1989), no.~suppl.,
  455--473, Workshop on Hyperbolic Systems and Mathematical Physics (Lisbon,
  1988). \MR{1080766}

\bibitem{Bejenaru2014a}
Ioan Bejenaru and Sebastian Herr, \emph{The cubic {D}irac equation: small
  initial data in {$H^1(\mathbb{R}^3)$}}, Comm. Math. Phys. \textbf{335}
  (2015), no.~1, 43--82. \MR{3314499}

\bibitem{Bejenaru2015b}
\bysame, \emph{The cubic {D}irac equation: small initial data in {$H^{\frac
  12}(\mathbb{R}^2)$}}, Comm. Math. Phys. \textbf{343} (2016), no.~2, 515--562.
  \MR{3477346}

\bibitem{Bejenaru2015}
\bysame, \emph{On global well-posedness and scattering for the massive
  {D}irac--{K}lein--{G}ordon system}, J. Eur. Math. Soc. (JEMS) \textbf{19}
  (2017), no.~8, 2445--2467. \MR{3668064}

\bibitem{Bjorken1964}
James~D. Bjorken and Sidney~D. Drell, \emph{Relativistic quantum mechanics},
  McGraw-Hill Book Co., New York-Toronto-London, 1964. \MR{0187641}

\bibitem{Bournaveas2015}
Nikolaos Bournaveas and Timothy Candy, \emph{Global well-posedness for the
  massless cubic {D}irac equation}, Int. Math. Res. Not. IMRN (2016), no.~22,
  6735--6828. \MR{3632067}

\bibitem{Candy2017a}
Timothy Candy and Sebastian Herr, \emph{{Conditional large data scattering
  results for the Dirac-Klein-Gordon system}}, arXiv, 2017.

\bibitem{Candy2016}
\bysame, \emph{{Transference of Bilinear Restriction Estimates to Quadratic
  Variation Norms and the Dirac-Klein-Gordon System}}, arXiv:1605.04882.

\bibitem{Chadam1974}
John~M. Chadam and Robert~T. Glassey, \emph{On certain global solutions of the
  {C}auchy problem for the (classical) coupled {K}lein-{G}ordon-{D}irac
  equations in one and three space dimensions}, Arch. Rational Mech. Anal.
  \textbf{54} (1974), 223--237. \MR{0369952 (51 \#6181)}

\bibitem{D'Ancona2017}
P.~{D'Ancona} and M.~{Okamoto}, \emph{{On the cubic Dirac equation with
  potential and the Lochak--Majorana condition}}, arxiv:1706.06479.

\bibitem{elliott2015}
Steven~R. Elliott and Marcel Franz, \emph{Colloquium: Majorana fermions in
  nuclear, particle, and solid-state physics}, Rev. Mod. Phys. \textbf{87}
  (2015), 137--163.

\bibitem{Lochak1985}
Georges Lochak, \emph{Wave equation for a magnetic monopole}, International
  Journal of Theoretical Physics \textbf{24} (1985), no.~10, 1019--1050.

\bibitem{Majorana1937}
Ettore Majorana, \emph{Teoria simmetrica dell'elettrone e del positrone}, Il
  Nuovo Cimento \textbf{14} (1937), no.~4, 171--184.

\bibitem{Ozawa2004}
Tohru Ozawa and Kazuyuki Yamauchi, \emph{Structure of {D}irac matrices and
  invariants for nonlinear {D}irac equations}, Differential Integral Equations
  \textbf{17} (2004), no.~9-10, 971--982. \MR{2082456}

\bibitem{Thaller}
Bernd Thaller, \emph{The {D}irac equation}, Texts and Monographs in Physics,
  Springer-Verlag, Berlin, 1992. \MR{1219537}

\end{thebibliography}
        \end{document}